\newtheorem{problem}{Problem}
\newtheorem{proposition}{Proposition}
\newtheorem{definition}{Definition}
\newtheorem{lemma}{Lemma}
\newtheorem{theorem}{Theorem}
\newtheorem{rmk}{Remark}
\newcommand{\nc}{\newcommand}
\nc{\disp}{\displaystyle}
\nc{\argmax}{\mathop{\rm arg~max}\limits}
\nc{\argmin}{\mathop{\rm arg~min}\limits}
\nc{\lr}[1]{\ensuremath{\left(#1\right)}}
\nc{\mlr}[1]{\mleft(#1\mright)}
\nc{\alr}[1]{\ensuremath{\left\langle#1\right\rangle}} %angle bracket
\nc{\blr}[1]{\ensuremath{\left[#1\right]}} %bracket
\nc{\clr}[1]{\ensuremath{\left\{#1\right\}}} %curly bracket
\nc{\nlr}[1]{\ensuremath{\left\|#1\right\|}}
\nc{\vlr}[1]{\ensuremath{\left|#1\right|}}
\nc{\amlr}[1]{\mleft\langle#1\mright\rangle} %angle bracket
\nc{\bmlr}[1]{\mleft[#1\mright]} %bracket
\nc{\cmlr}[1]{\mleft\{#1\mright\}} %curly bracket
\nc{\nmlr}[1]{\mleft\|#1\mright\|}
\nc{\fnmlr}[1]{\mleft\|#1\mright\|_\mathcal{F}}
\nc{\vmlr}[1]{\mleft|#1\mright|}
\nc{\rmvec}{\mathop{\rm vec}}
\nc{\rmtr}{\mathop{\rm tr}}
\nc{\tp}{\mathsf{T}}
\nc{\ex}{\mathbb{E}}
\nc{\normal}{\mathcal{N}}
\nc{\defeq}{:=}
\nc{\cpad}{~}
\nc{\setdef}[2]{\clr{#1 \mathrel{}\middle|\mathrel{} #2}}
\nc{\Z}{\mathbb{Z}}
\nc{\R}{\mathbb{R}}
\nc{\definite}[1]{\mathbb{S}_{#1}^{++}}
\nc{\semidefinite}[1]{\mathbb{S}_{#1}^{+}}
\nc{\N}{\mathbb{N}}
\nc{\ra}{\rightarrow}
\nc{\calA}{{\mathcal A}}    
\nc{\calB}{{\mathcal B}}    
\nc{\calC}{{\mathcal C}}    
\nc{\calD}{{\mathcal D}}    
\nc{\calE}{{\mathcal E}}    
\nc{\calF}{{\mathcal F}}    
\nc{\calG}{{\mathcal G}}    
\nc{\calH}{{\mathcal H}}    
\nc{\calI}{{\mathcal I}}    
\nc{\calJ}{{\mathcal J}}    
\nc{\calK}{{\mathcal K}}    
\nc{\calL}{{\mathcal L}}    
\nc{\calM}{{\mathcal M}}    
\nc{\calN}{{\mathcal N}}    
\nc{\calO}{{\mathcal O}}    
\nc{\calP}{{\mathcal P}}    
\nc{\calQ}{{\mathcal Q}}    
\nc{\calR}{{\mathcal R}}    
\nc{\calS}{{\mathcal S}}    
\nc{\calT}{{\mathcal T}}    
\nc{\calU}{{\mathcal U}}    
\nc{\calV}{{\mathcal V}}    
\nc{\calW}{{\mathcal W}}    
\nc{\calX}{{\mathcal X}}    
\nc{\calY}{{\mathcal Y}}    
\nc{\calZ}{{\mathcal Z}}    
\nc{\scrA}{{\mathscr A}}    
\nc{\scrB}{{\mathscr B}}    
\nc{\scrC}{{\mathscr C}}    
\nc{\scrD}{{\mathscr D}}    
\nc{\scrE}{{\mathscr F}}    
\nc{\scrF}{{\mathscr F}}    
\nc{\scrG}{{\mathscr G}}    
\nc{\scrH}{{\mathscr H}}    
\nc{\scrI}{{\mathscr I}}    
\nc{\scrJ}{{\mathscr J}}    
\nc{\scrK}{{\mathscr K}}    
\nc{\scrL}{{\mathscr L}}    
\nc{\scrM}{{\mathscr M}}    
\nc{\scrN}{{\mathscr N}}    
\nc{\scrO}{{\mathscr O}}    
\nc{\scrP}{{\mathscr P}}    
\nc{\scrQ}{{\mathscr Q}}    
\nc{\scrR}{{\mathscr R}}    
\nc{\scrS}{{\mathscr S}}    
\nc{\scrT}{{\mathscr T}}    
\nc{\scrU}{{\mathscr U}}    
\nc{\scrV}{{\mathscr V}}    
\nc{\scrW}{{\mathscr W}}    
\nc{\scrX}{{\mathscr X}}    
\nc{\scrY}{{\mathscr Y}}    
\nc{\scrZ}{{\mathscr Z}}    
\nc{\bbA}{{\mathbb A}}    
\nc{\bbB}{{\mathbb B}}    
\nc{\bbC}{{\mathbb C}}    
\nc{\bbD}{{\mathbb D}}    
\nc{\bbE}{{\mathbb E}}    
\nc{\bbF}{{\mathbb F}}    
\nc{\bbG}{{\mathbb G}}    
\nc{\bbH}{{\mathbb H}}    
\nc{\bbI}{{\mathbb I}}    
\nc{\bbJ}{{\mathbb J}}    
\nc{\bbK}{{\mathbb K}}    
\nc{\bbL}{{\mathbb L}}    
\nc{\bbM}{{\mathbb M}}    
\nc{\bbN}{{\mathbb N}}    
\nc{\bbO}{{\mathbb O}}    
\nc{\bbP}{{\mathbb P}}    
\nc{\bbQ}{{\mathbb Q}}    
\nc{\bbR}{{\mathbb R}}    
\nc{\bbS}{{\mathbb S}}    
\nc{\bbT}{{\mathbb T}}    
\nc{\bbU}{{\mathbb U}}    
\nc{\bbV}{{\mathbb V}}    
\nc{\bbW}{{\mathbb W}}    
\nc{\bbX}{{\mathbb X}}    
\nc{\bbY}{{\mathbb Y}}    
\nc{\bbZ}{{\mathbb Z}}    
\ifdefined\p@enumi{
\renewcommand{\p@enumi}{A}
}
\newcommand{\res}[1]{{\color{black}{#1}}} % changed for review
\newcommand{\qKL}[2]{D_{q} \left( {#1} \middle\| {#2} \right)}
\newcommand{\supp}[1]{\textrm{supp}({#1})}
\title{\LARGE \bf Tsallis Entropy Regularization for Linearly Solvable MDP and Linear Quadratic Regulator}
\author{Yota Hashizume$^{1}$, Koshi Oishi$^{1}$ and Kenji Kashima$^{1}$, {\it Senior Member, IEEE}% <-this % stops a space
\thanks{$^{1}$Y. Hashizume, K. Oishi, and K. Kashima are with the Graduate School of Informatics,
Kyoto University, Kyoto, Japan.
{\tt\small \res{hashizume.yota.88n@st.kyoto-u.ac.jp}; oishi.koshi.34y@st.kyoto-u.ac.jp; kk@i.kyoto-u.ac.jp} This work
was supported by JSPS KAKENHI Grant Number JP21H04875.}%
% \thanks{$^{2}$Kenji Kashima is with the Graduate School of Informatics, Kyoto University,
% 	Kyoto, Japan
%         {\tt\small kk@i.kyoto-u.ac.jp}}%
}
\begin{document}
  \maketitle
  \thispagestyle{empty}
  \pagestyle{empty}

  \begin{abstract}
    Shannon entropy regularization is widely adopted in optimal control due to its ability to promote exploration and enhance robustness, e.g., maximum entropy reinforcement learning known as Soft Actor-Critic. 
    In this paper, Tsallis entropy, which is a one-parameter extension of Shannon entropy, is used for the regularization of linearly solvable MDP and linear quadratic regulators. 
    We derive the solution for these problems and demonstrate its usefulness in balancing between exploration and sparsity of the obtained control law. 
    % Maximum entropy reinforcement learning, which adds an entropy term to the
    % objective function as a regularization term, has been proposed in methods including
    % Soft Actor-Critic and is widely used in real-world applications. It is known
    % to have advantages such as promoting exploration by the agent, providing robustness,
    % and stabilizing learning due to the influence of the entropy term. On the other
    % hand, Tsallis entropy is a 
    % proposed in the context of Tsallis statistical mechanics. While the solution
    % to the problem of maximizing the usual Shannon entropy results in a Gaussian,
    % the solution to the problem of maximizing Tsallis entropy results in a $q$-Gaussian,
    % which is a one-parameter extension of Gaussian. 
    % Since $q$-Gaussian includes 
    % % distributions with heavy tails like the Cauchy distribution or 
    % distributions with bounded support, optimal control by maximizing Tsallis entropy is considered
    % useful as a method for adjusting the 
    % and the strength of robustness. The contributions of this study are as
    % follows:
    % \begin{enumerate}
    %   \item Formulate the optimal control problem by maximizing Tsallis entropy and
    %     derive its Bellman equation.

    %   \item Propose a method for deriving specific control laws for linearly solvable
    %     Markov decision processes and linear quadratic regulators, and
    %     demonstrate the usefulness of optimal control by maximizing Tsallis
    %     entropy through numerical experiments.
    % \end{enumerate}
  \end{abstract}

  \section{Introduction}
    \label{sec:introduction}

The incorporation of Shannon entropy of a control policy into the objective function of reinforcement learning, a technique known as maximum entropy reinforcement learning, has been applied in approaches like Soft Actor-Critic \cite{haarnoja_soft_2018}. This method finds practical applications in various real-world scenarios, notably in fields like robotics.
% In this paper, we consider an optimal control problem that includes the Tsallis entropy of the control policy as a regularization term. 
  % The inclusion of Shannon entropy of a control policy in the objective function of reinforcement learning, known as maximum entropy reinforcement learning, 
  % has been used in methods such as Soft Actor-Critic\cite{haarnoja_soft_2018}, which has many real-world applications such as robotics. 
  An important feature of using entropy regularization is that the optimal policy is stochastic which is advantageous for exploration and 
  improves robustness \cite{eysenbach_maximum_2021}.
  Given these practical benefits, there is extensive research on optimal control utilizing entropy regularization.

  When using entropy regularization, 
  the probability density function of the optimal control policy takes positive values for all input values. 
  Due to this property, entropy regularization cannot be applied to problems that require sparse control policies. 
  For example, in optimizing transportation routes for logistics, 
  a robust control policy is required to handle unforeseen circumstances such as disasters and traffic congestion \cite{oishi_imitation-regularized_2024}. 
  However, since the number of available trucks is limited, 
  the number of routes is restricted, and the transportation plan needs to be sparse. 
  For such problems, while the robustness provided by entropy regularization is beneficial, 
  it does not satisfy the requirement for sparsity. 
  
  In \cite{bao_sparse_2022}, Tsallis entropy, 
  which originates from Tsallis statistical mechanics \cite{tsallis_possible_1988}, is used to regularize optimal transport problems to obtain high-entropy, but sparse solutions. 
  In the context of reinforcement learning, \cite{lee_sparse_2018, choy_sparse_2020} have proposed a method
  to obtain sparse control policies using a special case of Tsallis entropy.

  % 本研究では離散時間システムに対しTsallis entropy regularized optimal control problem (TROC)を定式化し、
  % そのBellman方程式を導出する。
  % \cite{lee_sparse_2017, choy_sparse_2020}のベルマン方程式は変形パラメータ$q$を$q=0$としたものに対応する。
  % 一般的な設定においては導出したBellman方程式を用いて
  % 最適制御則を求めることは、Tsallisエントロピーの性質上困難である。
  % 我々はネットワーク上の最適制御に対応する線形可解マルコフ決定過程と
  % 最適制御において広く用いられる例である線形二次レギュレータに対して
  % TROCの最適制御則を導出する方法を提案する。
  % 数値例で実際に最適制御則が大きいエントロピーを達成しつつ、スパース性を持つことを確認し、
  % TROCの有用性を示す。
  In this study, we formulate a Tsallis entropy regularized optimal control problem (TROC) 
  for discrete-time systems and derive its Bellman equation.
  The Bellman equations in \cite{lee_sparse_2018, choy_sparse_2020} 
  correspond to those with the deformation parameter $q$ set to $q=0$.
  In a general setting, finding the optimal control policy 
  using the derived Bellman equation is challenging due to the properties of Tsallis entropy. 
  In particular, we investigate the optimal control policies for linearly solvable Markov decision processes, 
  which correspond to optimal control on networks, and for the linear quadratic regulator, under the framework of TROC. 
  Through numerical examples, we verify that the optimal control policies achieve 
  high entropy while maintaining sparsity, demonstrating the usefulness of TROC.

  % 本稿の構成は以下の通りである。
  % セクション\ref{sec:sec:preliminary}ではTsallisエントロピーとそれに関連するTsallis統計
  % に関する定義や基本性質を述べる。
  % セクション\ref{sec:TROC}ではTROCを定式化し、そのベルマン方程式を導出する。
  % セクション\ref{sec:qKL},\ref{sec:qLQR}ではセクション\ref{sec:TROC}の結果に基づいてそれぞれ線形可解マルコフ決定過程と線形二次レギュレータに対するTROCの最適制御則を導出し、
  % 数値例を示す。
  % セクション\ref{sec:conclusion}では本稿のまとめを述べる。
  The rest of the paper is organized as follows.
  In Section \ref{sec:preliminary}, 
  we describe the definitions and fundamental properties related to Tsallis entropy and Tsallis statistics. 
  In Section \ref{sec:TROC}, we formulate the TROC and derive its Bellman equation. 
  In Sections \ref{sec:qKL} and \ref{sec:qLQR}, based on the results from Section \ref{sec:TROC}, 
  we derive the optimal control policies for TROC applied to linearly solvable Markov decision processes 
  and linear quadratic regulators, respectively. 
  In Section \ref{sec:OT}, we briefly discuss the optimal transport problem. 
  Section \ref{sec:conclusion} concludes the paper.

  \noindent{\bf Notation}\hspace{3mm}
  Let $\mathbb{E}[\cdot]$ and $\mathbb{V}[\cdot]$ denote the expected value and variance of a random variable, respectively. 
  When the distinction between a random variable and its realization is not clear, 
  the random variable is denoted by $x$, and its realization by $\bm{x}$.
  Let $\supp{\varphi}$ denote the support of the probability density function $\varphi$, that is, the set $\{x\mid \varphi(x)>0\}$. The Gamma function is denoted by $\Gamma$. 

  \section{Preliminary: Tsallis entropy}\label{sec:preliminary}
  % \section{Formulation of Tsallis Entropy Regularized Optimal Control Problem}
  % \subsection{$q$-Algebra and $q$-Entropy}
  In the context of Tsallis statistical mechanics\cite{tsallis_possible_1988}, $q$-exponential
  functions, $q$-products, and $q$-sums are defined as one-parameter extensions of
  the usual exponential functions, products, and sums, where $q$ is called a
  deformation parameter
  \cite{gell-mann_nonextensive_2004, abe_nonextensive_2001, borges_possible_2004}.
  In the limit $q \to 1$, they coincide with the usual ones. For simplicity, we assume $0 \leq q < 1$ in this paper.
  % First, we define the $q$-exponential function $\exp_{q}$ and the $q$-logarithm
  % function $\log_{q}$.
  \begin{definition}[$q$-Exponential and $q$-Logarithm functions]
    \begin{align}
      \exp_{q}(x) & := \left[1+(1-q)x\right]_{+}^{\frac{1}{1-q}} \quad &x\in\mathbb{R}, \label{eq:q-exp}\\
      \log_{q}(x) & := \frac{x^{1-q}-1}{1-q}  \quad &x>0, \label{eq:q-log}
    \end{align}
    where $[x]_{+}:= \max(x,0)$.
  \end{definition}

  \begin{rmk}
        The inverse function relationship exists, i.e., 
        % between these two in the following sense.
    \begin{align}
      \exp_{q}(\log_{q}(x)) = x,\   & x>0,\label{eq:inv-1} \\
      \log_{q}(\exp_{q}(x)) = x,\  & x>-\frac{1}{1-q}.
    \end{align}
    However, the standard exponent rules do not hold:
    \begin{align}
      \exp_{q}(x+y) & \neq \exp_{q}(x)\exp_{q}(y),    \\
      \log_{q}(xy)  & \neq \log_{q}(x) + \log_{q}(y).
    \end{align}
    although $q$-product can attain a similar formula  \cite{suyari_mathematical_2005}.
  \end{rmk}

  Next, we define Tsallis entropy, which is a generalization of Shannon entropy
  \cite{abe_nonextensive_2001}. It converges to Shannon entropy in the limit
  $q \to 1$.
  \begin{definition}
    Tsallis entropy $\mathcal{T}_{q}$ is defined as
    \begin{align}
      \mathcal{T}_{q}(\varphi) & := -\frac{1}{q}\left(\int \varphi(x)^{q}\log_{q}\varphi(x) dx - 1\right).
    \end{align}
  \end{definition}

  The deformed $q$-entropy defined below is used as a regularization term in \cite{bao_sparse_2022}.
  It is related to the Tsallis entropy by the following relationship, which is referred
  to as an additive duality.
  \begin{proposition}[$q$-Entropy]
    The deformed $q$-entropy $\mathcal{H}_{q}$ defined as
    \begin{equation}
      \mathcal{H}_{q}(\varphi) := - \frac{1}{2-q}\left(\int \varphi(x)\log_{q}\varphi(x) dx - 1\right)
    \end{equation}
    satisfies
    \begin{align}
      \mathcal{H}_{q}(\varphi) = \mathcal{T}_{2-q}(\varphi).
    \end{align}
  \end{proposition}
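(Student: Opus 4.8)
The plan is to prove the identity by pure algebra, substituting the definition of the $q$-logarithm into both sides and verifying that the two integrands coincide pointwise in $x$. First I would expand the integrand of $\mathcal{H}_q$. Using \eqref{eq:q-log}, we have $\log_q \varphi = (\varphi^{1-q}-1)/(1-q)$, so that
\begin{align*}
\varphi \log_q \varphi = \frac{\varphi^{2-q} - \varphi}{1-q},
\end{align*}
and hence $\mathcal{H}_q(\varphi) = -\tfrac{1}{2-q}\left(\int \tfrac{\varphi^{2-q}-\varphi}{1-q}\,dx - 1\right)$.

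Next I would expand the right-hand side $\mathcal{T}_{2-q}(\varphi)$. Replacing $q$ by $2-q$ in the definition of Tsallis entropy and in \eqref{eq:q-log} gives $\log_{2-q}\varphi = (\varphi^{q-1}-1)/(q-1)$, so the integrand of $\mathcal{T}_{2-q}$ becomes
\begin{align*}
\varphi^{2-q}\log_{2-q}\varphi = \varphi^{2-q}\cdot\frac{\varphi^{q-1}-1}{q-1} = \frac{\varphi - \varphi^{2-q}}{q-1}.
\end{align*}
The key observation is that the denominator flips sign under the substitution $q \mapsto 2-q$, i.e.\ $q-1 = -(1-q)$, and this exactly cancels the sign flip in the numerator, so the last expression equals $\tfrac{\varphi^{2-q}-\varphi}{1-q}$ — precisely the integrand obtained above for $\mathcal{H}_q$.

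Since the outer prefactor $-1/(2-q)$ and the additive constant $-1$ are identical in both expressions, and the integrands agree pointwise, the two functionals coincide, which establishes $\mathcal{H}_q(\varphi) = \mathcal{T}_{2-q}(\varphi)$. I do not expect any genuine obstacle here beyond careful bookkeeping: the only delicate point is tracking the sign of the factor $1-q$ under the replacement $q \mapsto 2-q$, since that is exactly the step that reconciles the two apparently different forms. Because every manipulation is algebraic and pointwise, no hypothesis beyond the implicit integrability of $\varphi^{2-q}$ (assumed wherever these functionals are defined) is required to justify the computation.
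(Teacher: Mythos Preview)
Your argument is correct: the pointwise identity $\varphi\log_q\varphi = \varphi^{2-q}\log_{2-q}\varphi$ follows from the sign flip $1-(2-q)=-(1-q)$ exactly as you compute, and the outer prefactor and additive constant match, so the two functionals agree. The paper itself states this proposition without proof, so there is nothing to compare against; your direct algebraic verification is the natural way to justify it.
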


  This proposition indicates that the Tsallis entropy and deformed $q$-entropy are
  equivalent. In this study, we will use the deformed $q$-entropy as a regularization
  term for the sake of notational simplicity.

  The KL divergence and Gaussian distribution are extended as follows \cite{furuichi_fundamental_2004,
  vignat_central_2007}:
  \begin{definition}[$q$-KL Divergence]
    The $q$-KL divergence $\qKL{\varphi}{\psi}$ between density functions $\varphi$ and $\psi$ is defined
    as
    \begin{align}
      \qKL{\varphi}{\psi}:= \frac{1}{2-q}\left(\int \varphi(x) \log_{q}\frac{\varphi(x)}{\psi(x)}dx - 1\right).
    \end{align}
  \end{definition}
  The $q$-KL divergence possesses some properties of the KL divergence, such as
  non-negativity and convexity \cite{furuichi_fundamental_2004}.

  \begin{definition}[multivariate $q$-Gaussian]\label{def:q-Gaussian}
    A $q$-Gaussian $N_{q}(\mu, \Sigma
    )$ is an $n$-dimensional random variable whose density function is given by \begin{align}
      \varphi(x) := \frac{1}{Z_{q}}\exp_{q}\left(-\frac{(x-\mu)^{\top}\Sigma^{-1}(x-\mu)}{(n+4)-(n+2)q}\right)
    \end{align}
    where
    \begin{align*}
      Z_{q}:= \det(\Sigma)^{1/2}\left(\pi \frac{(n+4)-(n+2)q}{1-q}\right)^{n/2}\frac{\Gamma\left(\frac{2-q}{1-q}\right)}{\Gamma\left(\frac{2-q}{1-q} + \frac{n}{2}\right)}. 
    \end{align*}
  \end{definition}
  \begin{proposition}[Statistics of $q$-Gaussian \cite{furuichi_maximum_2009}]
    For any $q$, $\mu$, and $\Sigma$, the $q$-Gaussian $N_{q}(\mu, \Sigma)$
    satisfies $\mathbb{E}[x] = \mu$ and $\mathbb{V}[x] = \Sigma$. 
    % \begin{align}
    %    & \mathbb{E}[x] = \mu,    \\
    %    & \mathbb{V}[x] = \Sigma,
    % \end{align}
    % and maximizes the $q$-entropy $\mathcal{H}_{q}$ over all probability
    % distributions on $\mathbb{R}^{n}$ under the constraint of variance.
  \end{proposition}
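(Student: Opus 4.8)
The plan is to compute the first two moments of the density $\varphi$ in Definition~\ref{def:q-Gaussian} directly, exploiting the rotational symmetry of $\exp_q$ after whitening. Write $c:=(n+4)-(n+2)q$ for the constant in the exponent; since $0\le q<1$, the function $z\mapsto\exp_q(-\|z\|^2/c)$ has compact support, so all moments exist and no convergence issue arises. \textbf{Mean (the easy part).} Substituting $y=x-\mu$, the density depends on $y$ only through the quadratic form $y^{\top}\Sigma^{-1}y$, which is invariant under $y\mapsto-y$. Hence the integrand defining $\mathbb{E}[x-\mu]$ is odd, the integral vanishes, and $\mathbb{E}[x]=\mu$.

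\textbf{Reducing the covariance to a radial integral.} For the covariance I would apply the whitening change of variables $z=\Sigma^{-1/2}(x-\mu)$, under which $y^{\top}\Sigma^{-1}y=\|z\|^2$ and $dx=\det(\Sigma)^{1/2}dz$, giving
\[
\mathbb{E}\!\left[(x-\mu)(x-\mu)^{\top}\right] = \frac{\det(\Sigma)^{1/2}}{Z_q}\,\Sigma^{1/2}\left(\int z z^{\top}\exp_q\!\left(-\frac{\|z\|^2}{c}\right)dz\right)\Sigma^{1/2}.
\]
The inner integrand is isotropic, so the matrix integral equals a scalar multiple $\alpha I_n$ of the identity: off-diagonal entries vanish under $z_i\mapsto-z_i$, and all diagonal entries coincide. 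Taking the trace gives $n\alpha=I_2$, where $I_2:=\int\|z\|^2\exp_q(-\|z\|^2/c)\,dz$, while the normalization $\int\varphi=1$ reads $\det(\Sigma)^{1/2}I_0/Z_q=1$ with $I_0:=\int\exp_q(-\|z\|^2/c)\,dz$. Therefore $\mathbb{E}[(x-\mu)(x-\mu)^{\top}]=(\alpha/I_0)\Sigma$, and it remains only to prove $\alpha=I_0$, i.e.\ $I_2=nI_0$.

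\textbf{Evaluating the radial integrals and the punchline.} Passing to spherical coordinates with surface factor $2\pi^{n/2}/\Gamma(n/2)$ and substituting $t=(1-q)r^2/c\in[0,1]$ turns each of $I_0,I_2$ into a Beta integral of the form $\tfrac12\!\left(\tfrac{c}{1-q}\right)^{(k+1)/2}B\!\left(\tfrac{k+1}{2},\tfrac{2-q}{1-q}\right)$ with $k=n-1$ and $k=n+1$, respectively. Forming the ratio, the factors $\pi^{n/2}$ and $\Gamma(\tfrac{2-q}{1-q})$ cancel, and applying $\Gamma(w+1)=w\Gamma(w)$ to the two Gamma ratios leaves
\[
\frac{I_2}{I_0}=\frac{c}{1-q}\cdot\frac{n}{2}\cdot\frac{1}{\,n/2+(2-q)/(1-q)\,}.
\]
The main obstacle—and the place where the specific constant $c=(n+4)-(n+2)q$ earns its keep—is checking that this collapses to exactly $n$: putting $n/2+(2-q)/(1-q)$ over the common denominator $2(1-q)$ yields precisely $c/(2(1-q))$, so the $c$'s and $(1-q)$'s cancel and $I_2/I_0=n$. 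Hence $\alpha=I_0$ and $\mathbb{V}[x]=\Sigma$. As a free consistency check, the same Beta-integral evaluation of $I_0$ alone reproduces the stated $Z_q$, confirming that $\varphi$ is correctly normalized. I expect the Gamma-function bookkeeping in this last step to be the only delicate point; everything else follows from symmetry.
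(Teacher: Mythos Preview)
Your argument is correct: the symmetry reduction to the radial integrals $I_0,I_2$ is clean, the Beta-integral evaluation is right, and the identity $n/2+(2-q)/(1-q)=c/(2(1-q))$ is exactly what makes $I_2/I_0=n$ fall out, so $\mathbb{V}[x]=\Sigma$ follows. The paper itself does not supply a proof of this proposition; it is quoted as a known fact from \cite{furuichi_maximum_2009}, so your computation provides the self-contained verification that the paper defers to the literature. The only minor remark is that the proof as written tacitly uses $0\le q<1$ (compact support, convergence of the Beta integrals), which matches the paper's standing assumption but is worth stating once since the proposition is phrased ``for any $q$''.
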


  \begin{figure}
    \centering
    \includegraphics[scale=0.4]{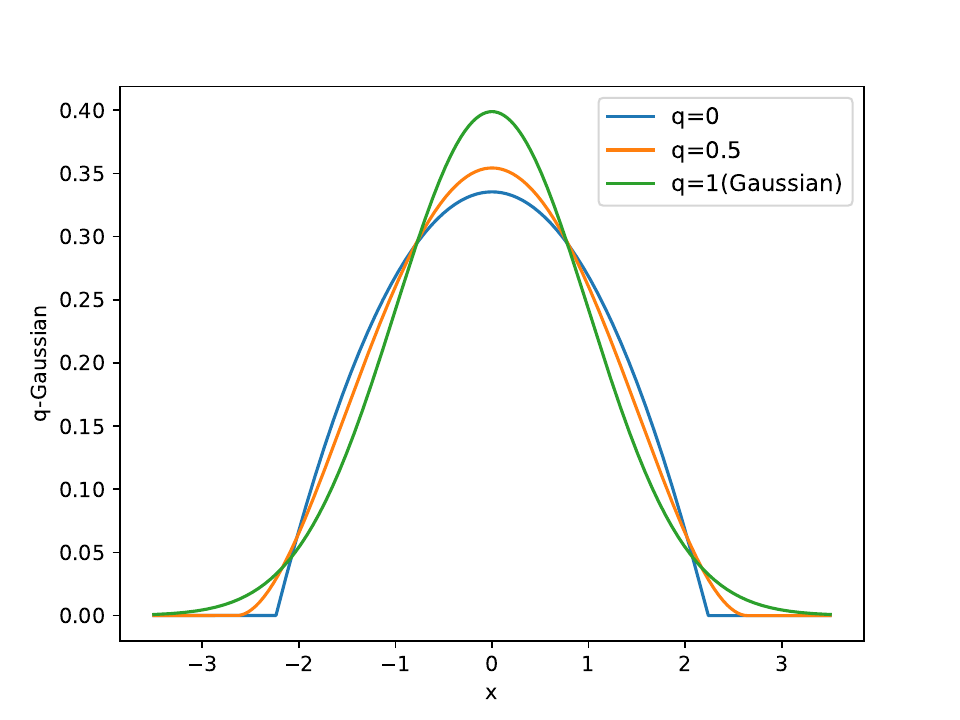}
    \caption{Density functions of the $q$-Gaussian.}
    \label{fig:q-Gaussian}
  \end{figure}

  It follows from \eqref{eq:q-exp} that the support of the $q$-Gaussian $N_q(\mu, \Sigma)$ is bounded, represented as
  \begin{align}\label{eq:Gaussian_support}
      &\supp{N_q(\mu, \Sigma)} \\
      &= \left\{x \mid (x-\mu)^{\top}\Sigma^{-1}(x-\mu) < \frac{n+4 - (n+2)q}{1-q}\right\}.\nonumber
  \end{align}
    This implies the closer $q$ is to $0$, the smaller the support becomes. 
    See Fig.~\ref{fig:q-Gaussian} for the density functions of $q$-Gaussian $N_{q}(0, 1)$ as
  $q$ varies. 

  \section{Tsallis Entropy Regularized Optimal Control Problem}
  \label{sec:TROC}

  \subsection{Problem formulation}
  Consider a discrete-time system with the state $x_{k}\in \mathbb{X}\subset \mathbb{R}
  ^{n}$ and the control input $u_{k}\in \mathbb{U}\subset \mathbb{R}^{m}$.
  Conditional state transition probability distribution of $x_{k+1}$ under
  $x_{k}= \bm{x}, u_{k}= \bm{u}$, is denoted as $\varphi_{x_{k+1}}(\bm{x}
  '\mid x_{k}= \bm{x}, u_{k}= \bm{u})$. Also, $\varphi_{x_0}(\bm{x})$ denotes the
  initial distribution. Under this setting, we formulate Tsallis Entropy Regularized
  Optimal Control Problem (TROC).
  \begin{problem}
    [TROC]\label{prob:TROC} Let the terminal time be $T \in \mathbb{Z}_{>0}$. Find
    the stochastic state feedback control policy $\pi_{k}(\bm{u}\mid\bm{x}) = \varphi
    _{u \mid x}(\bm{u}\mid x_{k}= \bm{x}), k = 0, \ldots, T-1$, that minimizes the
    cost function $J$ given by
    \begin{align}
      \begin{split}J(&\left\{\pi_k\right\}_{k=0}^{T-1}) := \mathbb{E}\left[l_{T}(x_{T})\right] \\ 
        &+ \sum_{k=0}^{T-1}\mathbb{E}\left[l_{k}(x_{k}, u_{k}) - \lambda\mathcal{H}_{q}(\pi(u_{k}\mid x_{k}))\right],
      \end{split}
    \end{align}
    where $\lambda > 0$ and $\mathcal{H}_{q}$ is the conditioned deformed $q$-entropy
    \begin{equation}
      \mathcal{H}_{q}(\pi(u\mid \bm{x})) := - \frac{1}{2-q}\left(\int \pi(u\mid \bm
      {x})\log_{q}\pi(u\mid \bm{x}) du - 1\right).
    \end{equation}
  \end{problem}

  The objective of Problem \ref{prob:TROC} is to balance the traditional cost minimization
  (i.e., the sum of running costs $l_{k}(x_{k}, u_{k})$ and the terminal cost
  $l_{T}(x_{T})$) and the maximization of the deformed $q$-entropy of the control
  policy, which encourages exploration in the control policy. The parameter $\lambda$ controls the trade-off between
  these objectives.

  \subsection{Bellman equation for TROC}
  In this section, we derive the Bellman equation for TROC in a general setting.
  The state-value function $V^{*}(i, \bm{x})$ is introduced as
  \begin{align}
    \begin{split}\label{eq:V}V^{*}(i,&\bm{x}) := \min_{\{\pi_k\}_{k=i}^{T-1}}\mathbb{E}[l_{T}(x_{T})] \\&+\sum_{k=i}^{T-1}\mathbb{E}[l_{k}(x_{k}, u_{k}) - \lambda\mathcal{H}_{q}(\pi(u_{k}\mid x_{k}))]\end{split}
  \end{align}
  and state-input value function
      \begin{align}\label{eq:Q}
      &Q^*_{k}(\bm{x}, \bm{u}) := l_{k}(\bm{x}, \bm{u}) \nonumber \\
      &\hspace{1cm} + \mathbb{E}[V^*(k+1, x_{k+1})\mid x_{k}=\bm{x},u_{k}=\bm{u}].
    \end{align}
  Then, we obtain the following:
  \begin{theorem}[Bellman Equation for TROC]
    \label{thm:bellman-TROC} For Problem \ref{prob:TROC}, the optimal control policy is given by  
        \begin{equation}
      \label{eq:trot-optimal-policy}\varphi_{u\mid x}^{*}(\bm{u}\mid \bm{x}) := \exp
      _{q}\left(-\frac{1}{\lambda}Q^*_{k}( \bm{x}, \bm{u}) + C_{k}(\bm{x})\right)
    \end{equation}
    where $C_{k}(\bm{x})$ is determined by $\int \varphi^{*}_{u|x}(u|\bm{x}) du = 1$.
    The value function  $V_k^{*}$ in \eqref{eq:V} is the solution to
\begin{align}\label{eq:bellman}
      V(T, \bm{x}) & = l_{T}(\bm{x}),                                                                                                                         \\
      V(k, \bm{x}) & = \frac{1-q}{2-q}\mathbb{E}_{\varphi^{*}_{u\mid x}}[Q_k(\bm{x}, u)] + \frac{\lambda}{2-q}(C_{k}(\bm{x})-1).
    \end{align}
    % In addtion, $\varphi_{u\mid x}^{*}$ is an optimal control policy.
  \end{theorem}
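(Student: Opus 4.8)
The plan is to prove both claims simultaneously by backward induction on $k$, using the dynamic programming principle to collapse the trajectory optimization in \eqref{eq:V} into a sequence of single-stage variational problems. The base case $V(T,\bm{x}) = l_{T}(\bm{x})$ is immediate from the definition. For the inductive step I would invoke the principle of optimality together with the definition of $Q^{*}_{k}$ in \eqref{eq:Q} to write
\begin{equation*}
  V^{*}(k, \bm{x}) = \min_{\pi_{k}(\cdot\mid\bm{x})}\left\{\mathbb{E}_{u\sim\pi_{k}}[Q^{*}_{k}(\bm{x}, u)] - \lambda\mathcal{H}_{q}(\pi_{k}(\cdot\mid\bm{x}))\right\},
\end{equation*}
thereby reducing the whole theorem to solving, for each fixed $\bm{x}$, the constrained minimization of $\int \pi(u)Q(u)\,du - \lambda\mathcal{H}_{q}(\pi)$ over densities $\pi$ satisfying $\int\pi\,du = 1$ and $\pi\geq 0$, where $Q(u) := Q^{*}_{k}(\bm{x}, u)$.

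The first key step is to rewrite the regularizer in closed form. Substituting $\log_{q}\pi = (\pi^{1-q}-1)/(1-q)$ into the definition of $\mathcal{H}_{q}$ turns the objective into $\int\pi Q\,du + \frac{\lambda}{(2-q)(1-q)}\int\pi^{2-q}\,du$ up to an additive constant. Because $0\leq q<1$ forces $2-q>1$, the map $t\mapsto t^{2-q}$ is strictly convex, so the functional is strictly convex in $\pi$ and any stationary point of the associated Lagrangian is the unique global minimizer. Introducing a multiplier $\nu$ for the normalization constraint and differentiating pointwise yields $Q(u) + \frac{\lambda}{1-q}\pi(u)^{1-q} + \nu = 0$ on the support; rearranging and absorbing $\nu$ and the additive constant into a single term $C$ gives $\pi(u)^{1-q} = 1 + (1-q)(-\tfrac{1}{\lambda}Q(u)+C)$. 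Recognizing the right-hand side, together with the $[\,\cdot\,]_{+}$ truncation forced by $\pi\geq 0$, as $\exp_{q}(-\tfrac{1}{\lambda}Q(u)+C)^{1-q}$ recovers exactly \eqref{eq:trot-optimal-policy}, with $C = C_{k}(\bm{x})$ fixed by normalization.

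The value-function recursion \eqref{eq:bellman} then follows by back-substitution. Applying the inverse relation $\log_{q}(\exp_{q}(x)) = x$ from the Remark on the support gives $\log_{q}\pi^{*}(u) = -\tfrac{1}{\lambda}Q(u) + C_{k}(\bm{x})$, so that $\int\pi^{*}\log_{q}\pi^{*}\,du = -\tfrac{1}{\lambda}\mathbb{E}_{\pi^{*}}[Q] + C_{k}(\bm{x})$. Inserting this into $\mathbb{E}_{\pi^{*}}[Q] - \lambda\mathcal{H}_{q}(\pi^{*})$ and collecting the coefficient $1 - \tfrac{1}{2-q} = \tfrac{1-q}{2-q}$ produces \eqref{eq:bellman}, closing the induction.

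The step I expect to be most delicate is the non-negativity constraint. Stationarity only pins down $\pi^{*}$ on its support; off the support one must appeal to KKT complementary slackness to justify setting $\pi^{*}=0$, which is precisely what the positive-part operation in the definition of $\exp_{q}$ encodes. The care needed is to verify that the truncated candidate satisfies the full KKT system, that its support is nonempty, and that the normalizer $C_{k}(\bm{x})$ is well defined — this is exactly where the sparsity-inducing behavior of Tsallis regularization enters, and it is the only part beyond the formal computation that demands genuine justification.
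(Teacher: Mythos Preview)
Your proposal is correct and follows essentially the same route as the paper: reduce to a one-step problem via dynamic programming, solve that problem by KKT stationarity to obtain the $\exp_q$ policy, and back-substitute to get \eqref{eq:bellman}. The paper packages the single-stage minimization into an appendix lemma (Lemma~\ref{lem:TRQ}) with a terse KKT argument, whereas you spell out the convexity and the complementary-slackness handling of the nonnegativity constraint inline; the substance is the same.
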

  \begin{proof}
    Standard dynamic programming yields
    % 塔特性とマルコフ性から、状態価値関数$V^*$は以下を満たす。
    \begin{align}\label{eq:bellman-before}
    V^{*}(k, \bm{x})=\min_{\pi_k}\mathbb{E}[Q_{k}^{*}(\bm{x}, u)] - \lambda\mathcal{H}_{q}(\pi(u\mid \bm{x})). 
    \end{align}
    By Lemma \ref{lem:TRQ} in the Appendix, the minimizer of \eqref{eq:bellman-before} is given by \eqref{eq:trot-optimal-policy}. 
    % \begin{align}
    %   \varphi_{u\mid x}^{*}(u\mid \bm{x}) = \exp_{q}\left(-\frac{1}{\lambda}Q^*_{i}(\bm{x}, u) + C_{i}(\bm{x})\right),
    % \end{align}
    % where $C_{i}(\bm{x})$ is a normalization constant. 
    Substituting this into \eqref{eq:bellman-before} yields \eqref{eq:bellman}.
  \end{proof}

  In the case of the Shannon entropy regularized optimal control problem \cite{neu_unified_2017}, that is,
  when $q=1$, the Bellman equation \eqref{eq:bellman} is replaced by
  \begin{align}
    V(k, \bm{x})= -\lambda \log\int \exp\left(-\frac{Q_{k}(\bm{x},u)}{\lambda}\right) du
  \end{align}
  and $C_{k}(\bm{x})=V(k, \bm{x})/\lambda$. Moreover, the optimal control policy is given by the soft-max function
  \begin{equation}
    \varphi_{u\mid x}^{*}(\bm{u}\mid \bm{x}) \propto \exp\left(-\frac{Q_{k}(\bm{x}, \bm{u})}{\lambda}\right),
  \end{equation}
  which is positive for all $\bm{u}$. 
  On the contrary, in the case of TROC, the distribution in \eqref{eq:trot-optimal-policy}, 
  which is called ent-max \cite{peters_sparse_2019}, does not satisfy $\varphi_{u\mid x}^{*}(\bm{u}\mid \bm{x}) \propto \exp_q(-Q_{k}(\bm{x}, \bm{u})/\lambda)$. In fact, the support of ent-max function is bounded.

  \section{$q$-Kullback-Leibler control}
  \label{sec:qKL}

  \subsection{Linearly solvable Markov Decision Processes}
  Kullback-Leibler (KL) control is a control problem having costs described in terms of the KL divergence, enabling efficient numerical solutions to nonlinear optimal
  control problems \cite{ito_kullbackleibler_2022}. Since KL control can also be interpreted as an entropy-regularized
  optimal control problem, it can be extended to the Tsallis entropy framework by
  replacing KL divergence with $q$-KL divergence.

  In this section, we assume states are defined on a finite set $\mathbb{X}=\{1 ,
  ...,n\}$, and control inputs are given by a transition matrix
  $P\in\mathbb{R}^{n\times n}$. That is, if at time $k$, the state is distributed
  according to the probability vector $\varphi_{k}\in \mathbb{R}$, then at time $k+1$,
  the state distributes according to $\varphi_{k+1}=P^{\pi}\varphi_{k}$ with input $P$.
  Under this setting, we formulate the $q$-KL control problem as follows:
  \begin{problem}
    [$q$-KL Control Problem]\label{prob:qKL} Consider a Markov process
    $\varphi_{k}^{\pi}$ with transition matrix $P_{k}^{\pi}$. For the initial distribution
    $\varphi_{0}$, state stage cost $l\in\mathbb{R}^{n}$, transition matrix $P^{0}$,
    terminal time $T\in\mathbb{Z}_{>0}$, and $\lambda > 0$, find the transition
    matrices $P_{k}^{\pi}, k=0,...,T-1$ that minimize the cost function $J$
    given by
    \begin{align}\label{eq:qKL-cost}
      J(\pi):=l^{\top}\varphi_{T}^{\pi}+ \sum_{k=0}^{T-1}\left(l^{\top}\varphi_{k}^{\pi} + \lambda \qKL{P_{k}^{\pi} \varphi_{k}^{\pi}}{P^{0} \varphi_{k}^\pi}\right).
    \end{align}
    Here, $\qKL{\varphi}{\psi}$ is the $q$-KL divergence in the discrete case, defined as
    \begin{align}
      \qKL{\varphi}{\psi}:= \frac{1}{2-q}\left(\sum_{i}\varphi_{i}\log_{q}\frac{\varphi_{i}}{\psi_{i}} - 1\right).
    \end{align}
  \end{problem}
  According to the objective function \eqref{eq:qKL-cost}, 
  the goal of Problem \ref{prob:qKL} is to minimize the cost associated with the state at each time point, 
  while also minimizing the $q$-KL divergence between the state transition matrix $P_{k}^{\pi}$ 
  conditioned on the state and the given transition matrix $P^{0}$. 
  Since $P_0$ represents the transition probabilities in the absence of control, 
  the cost of changing the transition matrix from $P_0$ to $P_{k}^{\pi}$ is expressed using $q$-KL divergence. 
  In particular, similar to the conventional KL divergence, 
  \begin{align}\label{eq:KLsupport}
      \supp{\varphi} \subset \supp{\psi}
  \end{align}
  is needed to make $\qKL{\varphi}{\psi}$ finite. 
  This implies that only transitions that can occur without control can be realized.
  
  Similar results to KL control are valid for the $q$-KL control problem. 
  
  \begin{theorem}
    For Problem \ref{prob:qKL}, the optimal control policy $P_{k}^{*}$ is given by 
    \begin{equation}\label{eq:KLopt}
      (P_{k}^{*})_{ij}:=P^{0}_{ij}\exp_{q}(-\frac{1}{\lambda}V^*(k+1)_{i} + C_{k}(j))
    \end{equation} 
    where $C_{k}(j)$ and $V^{*}$ are determined by $\sum_{i}(P_{k}^{*})_{ij} = 1$ and 
    \begin{align}
      V^{*}&(T)_{j}= l_{j}\\ 
      \begin{split}\label{eq:bellman-qKL}
        V^{*}&(k)_{j}= l_{j}\\
        &+\left\{\lambda\qKL{(P_k^*)_{:j}}{(P^0)_{:j}}+ V^{*}(k+1)^{\top}(P_{k}^{*})_{:j}\right\}\\
        &\quad (k = 0,...,T-1),
      \end{split}
    \end{align} 
    where $(P)_{:j}$ denotes the $j$-th column of $P$.
  \end{theorem}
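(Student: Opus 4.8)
The plan is to prove the theorem by backward dynamic programming, reading $V^*(k)_j$ as the optimal cost-to-go conditioned on the state being $j$ at time $k$. Conditioning on $x_k=j$ fixes the current distribution to the unit vector $e_j$, so the controlled and uncontrolled one-step marginals become the $j$-th columns $(P_k)_{:j}$ and $(P^0)_{:j}$, and the stage divergence term reduces exactly to $\qKL{(P_k)_{:j}}{(P^0)_{:j}}$. Starting from the terminal condition $V^*(T)_j=l_j$, the principle of optimality gives, for each $j$,
\[
V^*(k)_j=l_j+\min_{p}\left\{\lambda\,\qKL{p}{(P^0)_{:j}}+V^*(k+1)^{\top}p\right\},
\]
where the column $p$ (playing the role of $(P_k)_{:j}$) ranges over the simplex $\{p\in\mathbb{R}^n: p_i\ge 0,\ \sum_i p_i=1\}$.

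The core step is to solve this per-column program. Since the $q$-KL divergence is convex in its first argument, the objective is convex on the simplex and its minimizer is characterized by the KKT conditions. Introducing a multiplier $\eta$ for $\sum_i p_i=1$ and multipliers for the sign constraints, and using
\[
\frac{d}{dp_i}\left(p_i\log_{q}\frac{p_i}{(P^0)_{ij}}\right)=\frac{1}{1-q}\left((2-q)\left(\frac{p_i}{(P^0)_{ij}}\right)^{1-q}-1\right),
\]
the stationarity condition for an active coordinate ($p_i>0$) reads
\[
\left(\frac{p_i}{(P^0)_{ij}}\right)^{1-q}=\frac{1}{2-q}+\frac{1-q}{\lambda}\bigl(\eta-V^*(k+1)_i\bigr).
\]
Recognizing the right-hand side as $\bigl[1+(1-q)\bigl(-\tfrac{1}{\lambda}V^*(k+1)_i+C_k(j)\bigr)\bigr]$ for a constant $C_k(j)$ that absorbs $\eta$ together with the additive $1/(2-q)$ term, and recalling from \eqref{eq:q-exp} that $\exp_{q}(x)^{1-q}=[1+(1-q)x]_{+}$, we obtain precisely $(P_k^*)_{ij}=P^0_{ij}\exp_{q}\bigl(-\tfrac{1}{\lambda}V^*(k+1)_i+C_k(j)\bigr)$, i.e. \eqref{eq:KLopt}.

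It remains to interpret the truncation and to close the recursion. The operator $[\,\cdot\,]_{+}$ built into $\exp_{q}$ is exactly what encodes complementary slackness: coordinates $i$ with $-\tfrac1\lambda V^*(k+1)_i+C_k(j)\le-\tfrac{1}{1-q}$ are forced to zero, which is the source of the sparsity, and the support requirement \eqref{eq:KLsupport} holds automatically because $P^0_{ij}=0$ makes $(P_k^*)_{ij}=0$. The multiplier $\eta$, equivalently $C_k(j)$, is then pinned down by the normalization $\sum_i (P_k^*)_{ij}=1$. Substituting the optimal column back into the displayed recursion gives $V^*(k)_j=l_j+\lambda\,\qKL{(P_k^*)_{:j}}{(P^0)_{:j}}+V^*(k+1)^{\top}(P_k^*)_{:j}$, which is \eqref{eq:bellman-qKL}, completing the induction.

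The step I expect to be the main obstacle is the careful KKT argument: one must confirm that the stationarity relation combined with the $[\,\cdot\,]_{+}$ truncation yields a feasible column whose active set is self-consistent, and that a valid normalizing $C_k(j)$ always exists and is unique (monotonicity of $\sum_i P^0_{ij}\exp_{q}(\,\cdot\,+C)$ in $C$). A secondary point demanding care is the reduction of $\qKL{P_k^\pi\varphi_k^\pi}{P^0\varphi_k^\pi}$ to the per-column divergence: this is exact in the conditional dynamic programming because each step is evaluated from a pure state $e_j$, but decomposing the total cost over a general initial distribution should be understood through the conditional (chain-rule) form of the $q$-KL cost, since $\log_{q}$ is not additive.
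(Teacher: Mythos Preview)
Your proof is correct and follows essentially the same route as the paper: backward dynamic programming reduces the problem to a per-column convex minimization over the simplex, whose KKT conditions yield \eqref{eq:KLopt}, and substitution back gives \eqref{eq:bellman-qKL}. Your treatment is in fact more careful than the paper's---you compute the derivative of $p_i\log_q(p_i/P^0_{ij})$ explicitly, interpret the $[\,\cdot\,]_{+}$ truncation as complementary slackness, argue existence and uniqueness of $C_k(j)$ via monotonicity, and flag the cost-decomposition subtlety that the paper's proof leaves implicit.
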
 
  
  \begin{proof}
We show $V^{*}$ is the optimal state-value function.     Similar to \eqref{eq:bellman-before}, let us consider
\begin{align}
        V&(k)_{j}= l_{j}\\
        &+\min_\pi \left\{\lambda\qKL{(P_k^\pi)_{:j}}{(P^0)_{:j}}+ V(k+1)^{\top}(P_{k}^{\pi})_{:j}\right\}.
        \nonumber
\end{align}
%         By Lemma \ref{lem:TRQ}, the minimizer is given by $P_{k}^{*}$ 
%  in \eqref{eq:KLopt}. 
    Following the same reasoning as the proof of Lemma \ref{lem:TRQ}, we can find the minimizer. 
    The KKT conditions are given by
    \begin{align}
      V(k+1)_{i} + \lambda\log_{q}\frac{(P_{k}^{\pi})_{ij}}{P^{0}_{ij}} - C_{k}(j)' = 0,\label{eq:qKL-KKT-1} \\
      C_{j}(k)'\left(\sum_{i}(P_{k}^{\pi})_{ij} - 1\right) = 0 \label{eq:qKL-KKT-2}.
    \end{align}
    From \eqref{eq:qKL-KKT-1}, the minimizer is given by $P_{k}^{*}$ in \eqref{eq:KLopt}.
  \end{proof}

  \subsection{Numerical example}
  In this section, we solve Problem \ref{prob:qKL} for 
  \begin{equation}
    \label{eq:P_0}P_{0} := \frac{1}{3}
    \begin{bmatrix}
      1 & 1 & 0     & 1 \\
      1 & 1 & 1 & 0     \\
      0     & 1 & 1 & 1 \\
      1 & 0     & 1 & 1
    \end{bmatrix},\ 
        l =
    \begin{bmatrix}
      1 & 2 & 3 & 4
    \end{bmatrix}^{\top}.
    % \begin{bmatrix}
    %   0.333 & 0.333 & 0     & 0.333 \\
    %   0.333 & 0.333 & 0.333 & 0     \\
    %   0     & 0.333 & 0.333 & 0.333 \\
    %   0.333 & 0     & 0.333 & 0.333
    % \end{bmatrix}
  \end{equation}
  % and
  % \begin{equation}
  %   l =
  %   \begin{bmatrix}
  %     1 & 2 & 3 & 4
  %   \end{bmatrix}^{\top}.
  % \end{equation}
  % For the $q$-KL divergence $\qKL{p}{q}$
  % to take a finite value,
  % \begin{equation}
  %   \supp{p}\subset \supp{q}
  % \end{equation}
  % must hold. Therefore, f
  By \eqref{eq:KLsupport}, for example,
  $(P_{k}^{\pi})_{3,1}$ should be $0$, meaning it is not possible to transition from
  state $3$ to state $1$. Hence, this problem becomes one of optimizing transitions
  on the graph depicted in Fig.~\ref{fig:q-KL}, where each state represents a
  node on the graph. 
  
  Figure \ref{fig:q-KL} depicts the transition matrix $P_{T}^{*}$ for sufficiently large $T$.
    While all transition probabilities are
  positive for $q=1$, some of them are $0$ for $q =0.25$. 
  For example, $V_T^{*}$ and $C_{T}(1)$ for $q=0.25$ are given by
  \begin{align}
    V_{T}^{*} &=
    \begin{bmatrix}
      22.040 & 23.040 & 25.284 & 25.336
    \end{bmatrix}^{\top}, \\
    C_{T}(1) &= 22.991.
  \end{align}
  Therefore, the argument of $\exp_q$ in \eqref{eq:KLopt} is
  \begin{equation*}
    z := V_{T}^{*} - C_{T}(1) = \begin{bmatrix}
      0.951 & -0.049 & -2.293 & -2.345
    \end{bmatrix}^{\top}. 
  \end{equation*}
  Since $1 + (1-q)z_{4} = -0.759 < 0$, it follows from \eqref{eq:q-exp} that $(P_{T}^{*})_{4,1} = 0$.
  When we regard this problem as a logistics planning as explained in Section \ref{sec:introduction}, 
  $(P_{T}^{*})_{4,1} = 0$ means that we do not need to arrange transportation from node $1$ to $4$, while retaining a suitable diversity of routes. 

Similarly, in optimizing
  evacuation routes for residents, it is necessary to disperse people to prevent
  overcrowding, while also ensuring that people evacuate in groups to some
  extent for safety, requiring a balanced route. Solutions obtained from the $q$-KL control problem are considered to be useful for such problems.

  \begin{figure}
    \begin{minipage}{0.45\hsize}
      \centering
      \includegraphics[width=\hsize]{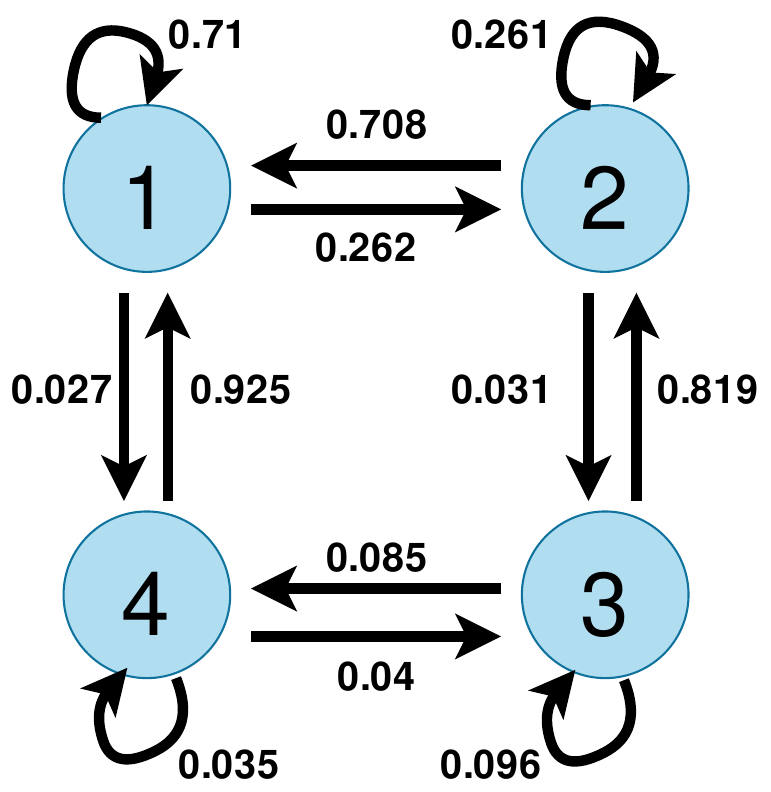}
      \subcaption{$q=1$} \label{fig:q-KL-1}
    \end{minipage}
    \begin{minipage}{0.45\hsize}
      \centering
      \includegraphics[width=\hsize]{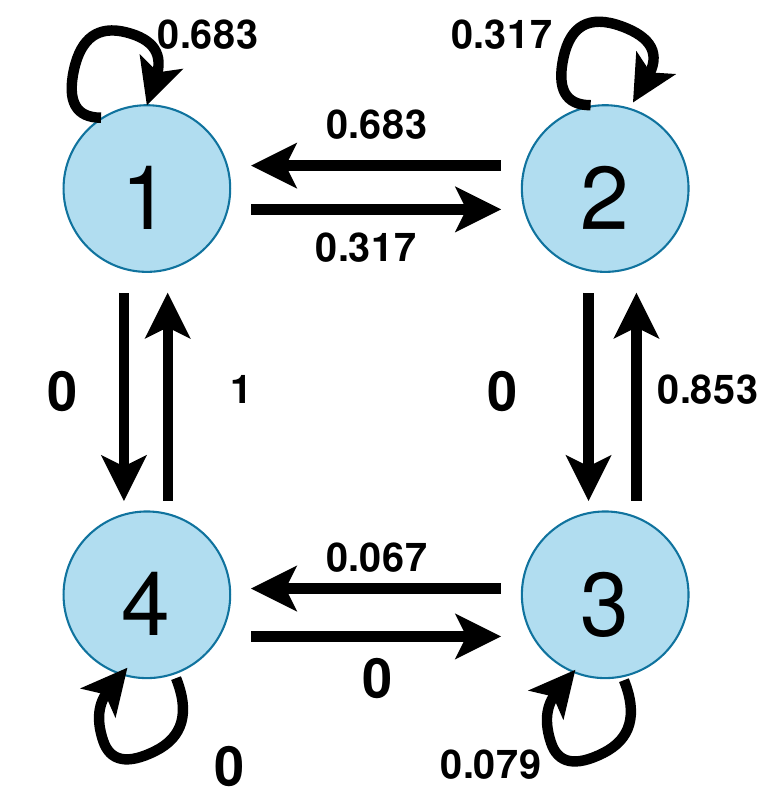}
      \subcaption{$q=0.25$} \label{fig:q-KL-2}
    \end{minipage}
    \caption{Optimal transition probabilities}
    \label{fig:q-KL}
  \end{figure}

  \section{$q$-Linear Quadratic Regulator}
  \label{sec:qLQR}

  \subsection{Ricatti equations}

  The Linear Quadratic Regulator (LQR) is an optimal control problem for linear
  systems with quadratic cost functions, which can be solved analytically. In
  this section, we formulate an optimal control problem that incorporates Tsallis
  entropy as a regularization term for LQR and discuss its properties; See \cite[Proposition 1]{ito_maximum_2024} and \cite[Proposition 1]{ito_maximum_2023} for the Shannon entropy case.

  We consider the state $x_{k}\in \mathbb{R}^{n}$ and
  control input $u_{k}\in \mathbb{R}^{m}$ to follow the linear system given by
  \begin{align}
    x_{k+1} & = A_{k}x_{k}+ B_{k}u_{k}. 
  \end{align}
  The stage cost at each time is given by the following quadratic form:
  \begin{align}
    \begin{split}\label{eq:lqr-cost}
      l_{k}(x, u) & := x^{\top}Q_{k}x + 2x^{\top}S_{k}u + u^{\top}R_{k}u \quad (k = 0,...,T-1) \\
      l_{T}(x)    & := x^{\top}Q_{T}x.   
    \end{split}
  \end{align}
  In this setting, the optimal control policy is given by the following theorem. 
  \begin{theorem}
    \label{thm:lqr-TROC}
  For Problem \ref{prob:TROC} with quadratic cost functions \eqref{eq:lqr-cost}, the value function is represented as $V^*(k, x) = x^{\top}
  \Pi_{k}x + \textrm{const.}$, and the optimal control policy is $q$-Gaussian with mean $\mu$ and variance $\Sigma$ as follows:
  \begin{align}
    &\mu_k := K_{k} x_k, \label{eq:lqr-mu}\\
    &\Sigma_k^{-1} := \frac{(n+4)-(n+2)q}{\lambda}\eta\widetilde{R}_{k},\label{eq:lqr-sigma} \\
    &\eta := \left\{
      \det(\widetilde{R}_k)^{-1/2} \left(\frac{\pi \lambda}{1-q}\right)^{n/2} 
      \frac{\Gamma\left(\frac{2-q}{1-q}\right)}{\Gamma\left(\frac{2-q}{1-q} + \frac{n}{2}\right)} 
    \right\}^{\frac{2(1-q)}{(n+2)-nq}},
  \end{align}
  wtih
  \begin{align}
    \widetilde{R}_{k} & := R_{k}+ B_{k}^{\top}\Pi_{k+1}B_{k}                                                  \\
    \widetilde{S}_{k} & := S_{k}+ B_{k}^{\top}\Pi_{k+1}A_{k}                                                  \\
    \widetilde{Q}_{k} & := Q_{k}+ A_{k}^{\top}\Pi_{k+1}A_{k}                                                  \\
    \Pi_{k}           & := \widetilde{Q}_{k}- \widetilde{S}_{k}\widetilde{R}_{k}^{-1}\widetilde{S}_{k}^{\top}\\
    K_{k} &:= -\widetilde{R}_{k}^{-1}\widetilde{S}_{k}^{\top}. 
  \end{align}
  \end{theorem}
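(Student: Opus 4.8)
The plan is to prove both assertions simultaneously by backward induction on $k$, carrying the hypothesis that $V^*(k+1,x) = x^{\top}\Pi_{k+1}x + \mathrm{const}$. The base case follows at once from $V(T,x)=l_T(x)=x^{\top}Q_Tx$ in \eqref{eq:bellman}, giving $\Pi_T=Q_T$. For the inductive step I would substitute the deterministic dynamics $x_{k+1}=A_kx_k+B_ku$ and the hypothesis into the definition of $Q_k^*$; because the transition is deterministic the conditional expectation collapses and $Q_k^*(\bm x,\bm u)$ becomes an explicit quadratic in $(\bm x,\bm u)$. Collecting terms and completing the square in $\bm u$ yields
\[
Q_k^*(\bm x,\bm u) = (\bm u - K_k\bm x)^{\top}\widetilde R_k(\bm u - K_k\bm x) + \bm x^{\top}\Pi_k\bm x + \mathrm{const},
\]
with $\widetilde R_k,\widetilde S_k,\widetilde Q_k,\Pi_k,K_k$ exactly as defined in the theorem. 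This already identifies the mean $\mu_k=K_kx_k$ of \eqref{eq:lqr-mu}.

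The heart of the proof is recognizing the optimal policy \eqref{eq:trot-optimal-policy} as a $q$-Gaussian and pinning down $\Sigma_k$. Substituting the completed-square form into \eqref{eq:trot-optimal-policy}, the $\bm u$-independent piece $-\tfrac1\lambda(\bm x^{\top}\Pi_k\bm x+\mathrm{const})$ merges with $C_k(\bm x)$ into a single constant $\widetilde C_k$. The subtlety is that $\exp_q$ does not factor, so this constant cannot simply be pulled out as a multiplicative normalizer. Instead I would invoke the scaling identity
\[
\exp_{q}(a+b) = \exp_{q}(b)\,\exp_{q}\!\left(\frac{a}{1+(1-q)b}\right),
\]
valid whenever $1+(1-q)b>0$ and following directly from \eqref{eq:q-exp}. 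Applied with $b=\widetilde C_k$, it rewrites the policy as a multiplicative prefactor times $\exp_q$ of the quadratic $-\tfrac1\lambda(\bm u-\mu_k)^{\top}\widetilde R_k(\bm u-\mu_k)$ rescaled by $1/(1+(1-q)\widetilde C_k)$. Matching this against Definition \ref{def:q-Gaussian} forces $\Sigma_k^{-1}=\frac{(n+4)-(n+2)q}{\lambda}\,\eta\,\widetilde R_k$ with $\eta:=1/(1+(1-q)\widetilde C_k)$, which is \eqref{eq:lqr-sigma}.

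I expect the main obstacle to be solving for $\eta$ itself, since the normalization is self-consistent: the prefactor must equal $1/Z_q$, yet the normalizer $Z_q$ in Definition \ref{def:q-Gaussian} depends on $\det\Sigma_k$, which in turn depends on $\eta$ through \eqref{eq:lqr-sigma}. The plan is to note that $\exp_q(\widetilde C_k)=\eta^{-1/(1-q)}$, substitute the induced value of $\det\Sigma_k$ into $Z_q$, and solve the scalar power equation $Z_q=\eta^{1/(1-q)}$. After the factors $(n+4)-(n+2)q$ cancel, this collapses to $G=\eta^{((n+2)-nq)/(2(1-q))}$, where $G$ is exactly the bracketed quantity in the theorem, yielding the stated closed form for $\eta$.

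Finally, to close the induction I would feed the $q$-Gaussian policy back into the Bellman equation \eqref{eq:bellman}. Using $\mathbb{E}[u]=\mu_k$ and $\mathbb{V}[u]=\Sigma_k$ from the Statistics of $q$-Gaussian proposition, the expectation of the completed-square $Q_k^*$ equals $\tr(\widetilde R_k\Sigma_k)+\bm x^{\top}\Pi_k\bm x+\mathrm{const}$, while $C_k(\bm x)=\widetilde C_k+\tfrac1\lambda(\bm x^{\top}\Pi_k\bm x+\mathrm{const})$ is quadratic in $\bm x$ with the same $\Pi_k$. The quadratic-in-$\bm x$ coefficients then combine as $\frac{1-q}{2-q}\Pi_k+\frac{1}{2-q}\Pi_k=\Pi_k$, so $V(k,\bm x)=\bm x^{\top}\Pi_k\bm x+\mathrm{const}$, completing the induction and confirming the Riccati recursion for $\Pi_k$.
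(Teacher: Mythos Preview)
Your proposal is correct and follows essentially the same route as the paper: backward induction on $k$, completion of the square in $u$ to expose $(\bm u-K_k\bm x)^{\top}\widetilde R_k(\bm u-K_k\bm x)$, the $\exp_q$ scaling identity to recognize a $q$-Gaussian, and solving the normalization self-consistency to obtain $\eta$. The only organizational difference is that the paper isolates the $q$-Gaussian identification and the $\eta$ computation into Lemma~\ref{lem:TRQ-quadratic} (with $\eta:=\exp_q(C)^{-(1-q)}$, which is your $1/(1+(1-q)\widetilde C_k)$), and then closes the induction by observing that the minimum in \eqref{eq:lqr-min} is translation-invariant in $u$ and hence independent of $\bm x$, rather than recombining the $\frac{1-q}{2-q}$ and $\frac{1}{2-q}$ coefficients explicitly as you do.
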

  \begin{proof}
    From Theorem \ref{thm:bellman-TROC}, we have
    $V(T,\bm{x}) = \bm{x}^{\top}Q_{T}\bm{x}$. Assuming that $V(k+1, \bm{x}) = \bm
    {x}^{\top}\Pi_{k+1}\bm{x}+ \textrm{const.}$, the Bellman equation becomes below:
    \begin{align}
      V(k, \bm{x}) & = \min_{\varphi_{u\mid x}}\mathbb{E}\left[Q_{k}(\bm{x}, u_{k}) - \lambda \mathcal{H}_{q}(\varphi_{u\mid x}(\cdot\mid \bm{x}))\right] \\
      \begin{split}
        &=\min_{\varphi_{u\mid x}}\mathbb{E}[\bm{x}^{\top}Q_{k}\bm{x}+ 2\bm{x}^{\top}S_{k}u_{k}+ u_{k}^{\top}R_{k}u_{k}\\
        &\quad+ (A_{k}\bm{x}+ B_{k}u_{k})^{\top}\Pi_{k+1}(A_{k}\bm{x}+ B_{k}u_{k})\\
        &\quad- \lambda\mathcal{H}_{q}(\varphi_{u\mid x}(\cdot\mid \bm{x}))] + \textrm{const.}
      \end{split}      \\
        & = \bm{x}^{\top}\Pi_{k}\bm{x} \nonumber                                                                                                                 \\
      \begin{split}\label{eq:lqr-min}
        &\quad + \min_{\varphi_{u\mid x}}\mathbb{E}[ (u_{k}-K_{k}\bm{x})^{\top}\widetilde{R}_{k}(u_{k}-K_{k}\bm{x}) \\
        &\quad - \lambda\mathcal{H}_{q}(\varphi_{u\mid x}(\cdot\mid \bm{x})) ] + \textrm{const.}
      \end{split}
    \end{align}
    By Lemma \ref{lem:TRQ-quadratic} in the Appendix, the minimizer of \eqref{eq:lqr-min} is the density
    function of a $q$-Gaussian with mean $\mu$ in \eqref{eq:lqr-mu} and variance $\Sigma$ in \eqref{eq:lqr-sigma}. 
    Since the minimum value does not depend on $\bm{x}$, it holds that
    $V(k, \bm{x}) = \bm{x}^{\top}\Pi_{k}\bm{x}+ \textrm{const.}$. Thus, the obtained result is proven inductively.
  \end{proof}

  This theorem shows that the linear feedback with the same gain matrix as LQR with additive noise $w_{k}\sim N_q(0,\Sigma_k)$ is optimal.
  The resulting closed-loop dynamics is
  \begin{align}\label{eq:closed-loop}
    x_{k+1} & = (A+B K_{k})x_{k} + B w_{k}.
  \end{align}
  Since the $q$-Gaussian has bounded support, the support for the state will also be bounded at any time if the initial state distribution is bounded.
    Furthermore, the region of the support can be easily estimated by \eqref{eq:Gaussian_support} and \eqref{eq:closed-loop}.  
      This can be particularly important in real-world applications such as robotics, 
  where the system can fail if inputs or states move outside their stable operating region. 

  \subsection{Numerical example}
  In this section, we solve Problem \ref{prob:TROC} for $q=0.25$, $\lambda=0.01$ and sufficiently large $T$, the LQR problem ($n=m=1$) with
  \begin{align}
      A = 1,\  
      B= 1,\  
      Q = 1,\  
      S= 0,\  
      R= 1.
  \end{align}
  Figure \ref{fig:q-LQR} shows an optimally controlled trajectory and its guaranteed region of the support. 

  \begin{figure}[t]
    \centering
    \includegraphics[width=\hsize]{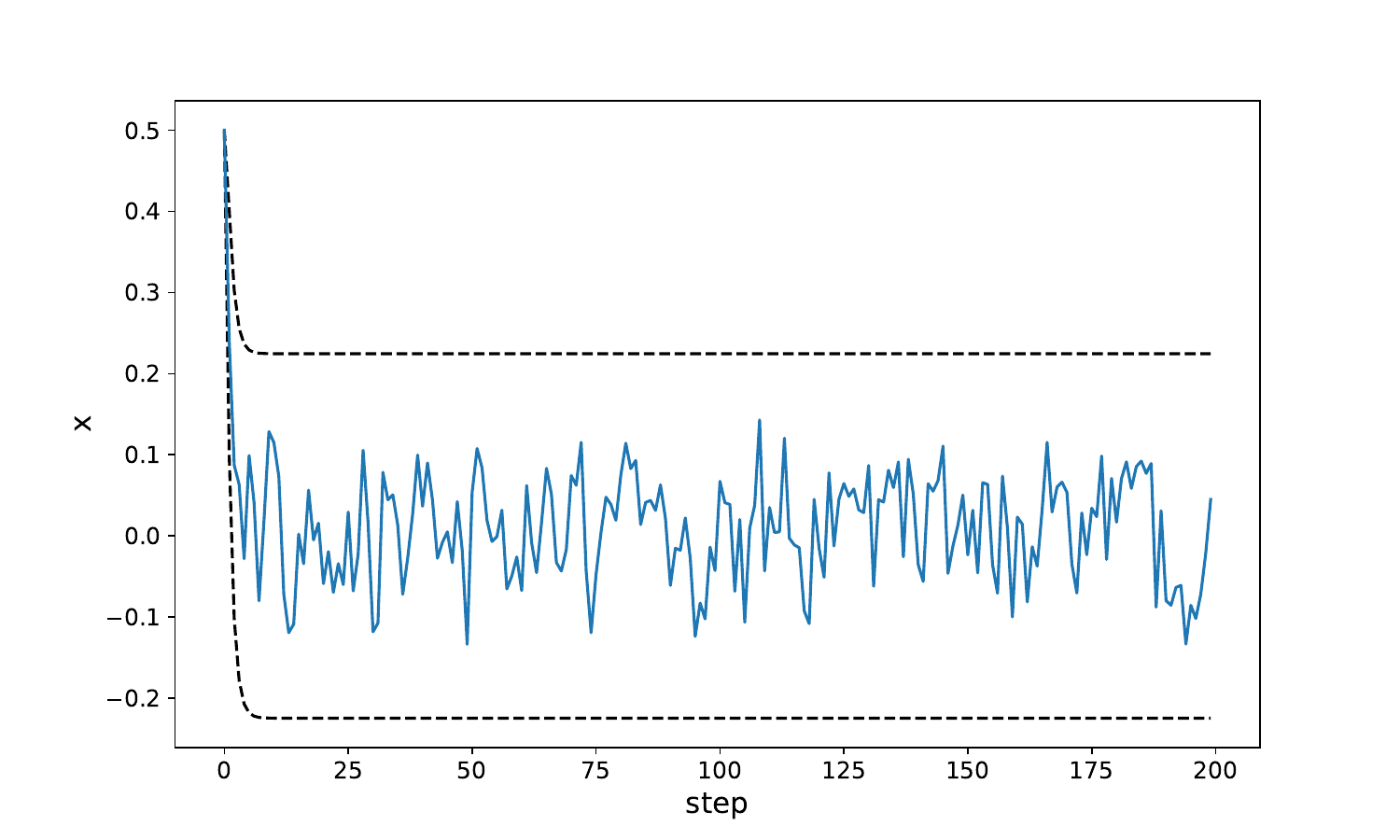}
    \caption{Optimal state trajectories for $q$-LQR, $q=0.25$}
    \label{fig:q-LQR}
  \end{figure}

  \begin{figure}[t]
    \centering
    \includegraphics[width=\hsize]{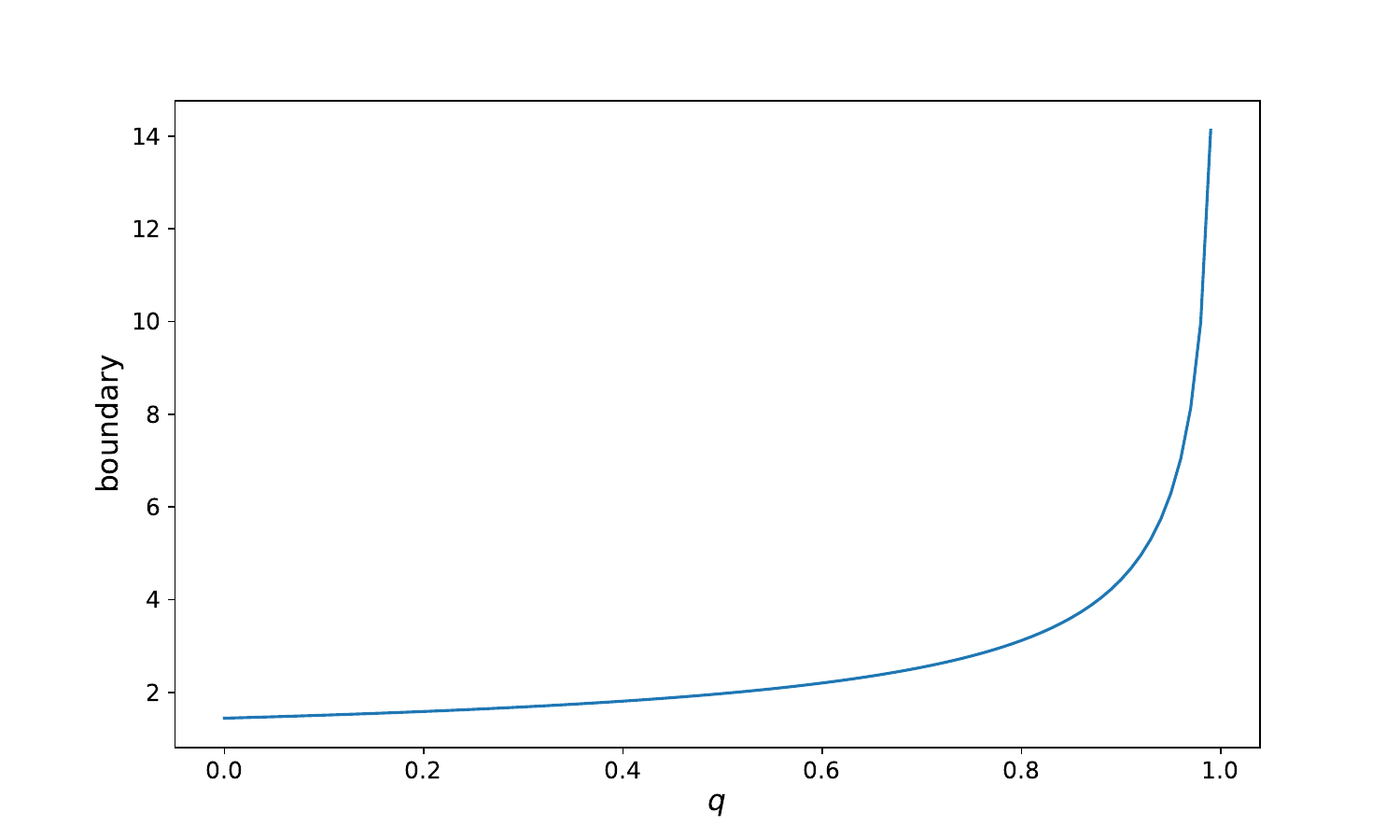}
    \caption{Boundary of support for the additive noise $w_T$}
    \label{fig:q-LQR-boundary}
  \end{figure}

  \begin{figure}[t]
    % \begin{minipage}{0.45\hsize}
    %   \centering
    %   \includegraphics[scale=0.28]{}
    %   \subcaption{Cost} \label{fig:q-LQR-cost}
    % \end{minipage}
    % \begin{minipage}{0.45\hsize}
    %   \centering
    %   \includegraphics[scale=0.28]{}
    %   \subcaption{Shannon entropy} \label{fig:q-LQR-entropy}
    % \end{minipage}
    \centering
    \includegraphics[width=\hsize]{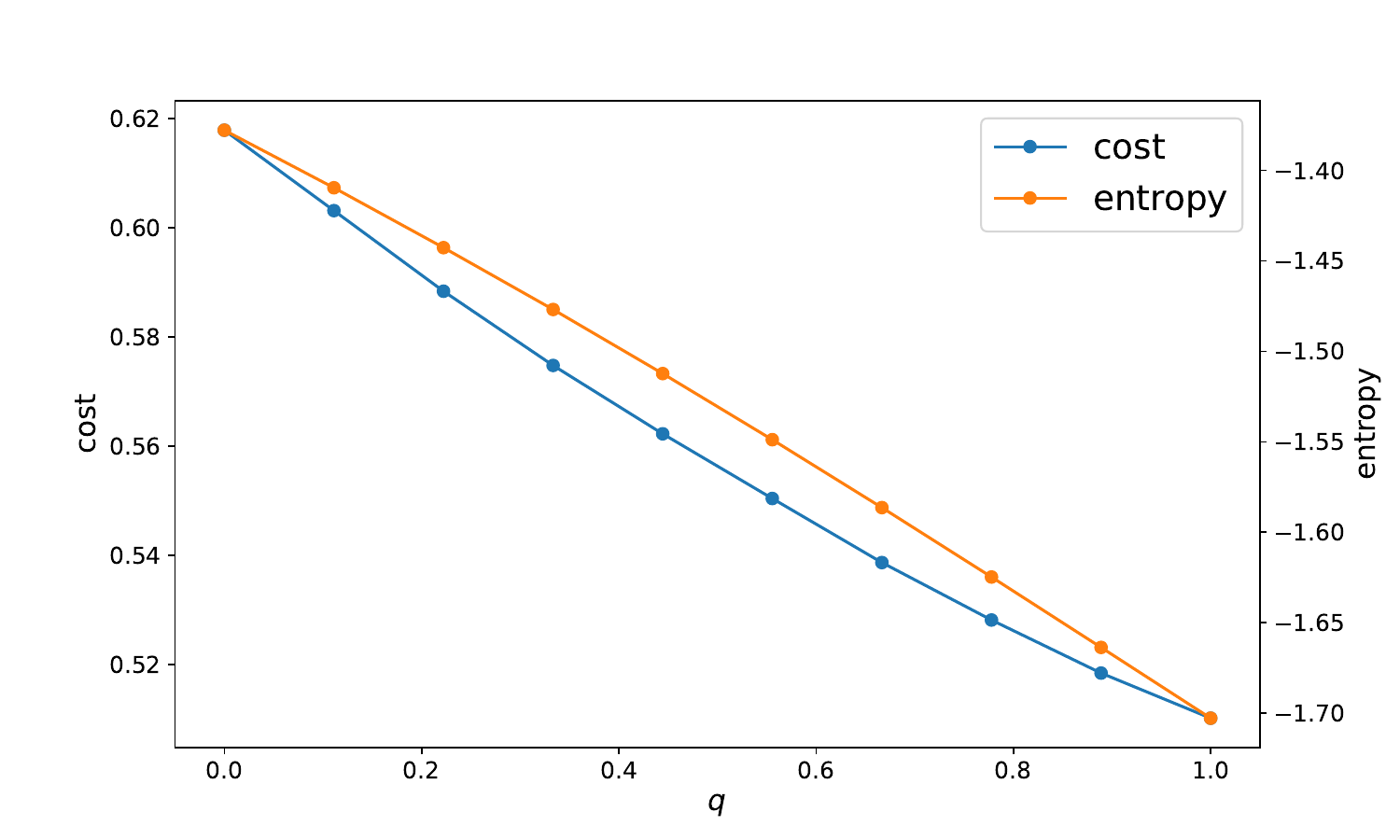}
    \caption{The dependence of control cost and entropy on $q$}
    \label{fig:q-LQR-cost-entropy}
  \end{figure}

Figure \ref{fig:q-LQR-boundary} shows how the boundary $\beta_T$ of the additive noise's support changes with the variation of $q$, i.e., $\supp{w_T}=[-\beta_T,\beta_T]$, 
  indicating that the larger the $q$, the larger the the support becomes. 
  % Furthermore, in the case of $q=1$, it corresponds to the conventional LQR, 
  % and the boundary of the support becomes infinite.
  Figure \ref{fig:q-LQR-cost-entropy} 
  illustrate the dependence of control cost and entropy on $q$, respectively, where increasing $q$ leads to a decrease in both control cost and entropy.
  % Figures \ref{fig:q-LQR-1} and \ref{fig:q-LQR-2} show the state trajectories for $q=1$ and $q=0.25$, respectively. 
  % The strength of the regularization term $\lambda$ is set so that the entropy of the control policy is of the same magnitude, 
  % which results in a smaller variance of the control policy for $q=1$ compared to $q=0.25$. 
  % The "boundary of support" in the figures represents the boundary of the state's support for $q=0.25$. 
  % As expected, for $q=0.25$, it can be observed that the state trajectories do not go beyond the boundary. 
  % In contrast, for $q=1$, despite having a smaller variance compared to $q=0.25$, the state trajectories can be seen going beyond the boundary due to the influence of outlier inputs. 

  % One of the advantages of $q$-LQR is its ability to guarantee the range within which the state can move for $0 \leq q < 1$. 
  % This can be particularly important in real-world applications such as robotics, 
  % where the system can fail if inputs or states move outside their stable operating region. 
  % However, from the perspective of exploration and exploitation, 
  % it is also desirable to increase the entropy of the control policy. 
  % In such situations, $q$-LQR, which allows for high entropy while guaranteeing the range of state movement, 
  % can be considered useful.

  \section{Discussion: Optimal transport problem}
  \label{sec:OT}
  
  In this section, we briefly discuss the optimal transport problem; See \cite{oishi_imitation-regularized_2024} for MDP and \cite{ito_maximum_2024, ito_maximum_2023} for the LQR and references therein. The MDP case is formulated as follows: \begin{problem}
    Consider a Markov process
    $\varphi_{k}^{\pi}$ with transition matrix $P_{k}^{\pi}$. For the initial distribution
    $\varphi_{0}$, terminal distribution $\varphi_{T}$, transition cost $C\in\mathbb{R}^{n\times n}$, transition matrix $P^{0}$,
    terminal time $T\in\mathbb{Z}_{>0}$, and $\lambda > 0$, find the transition
    matrices $P_{k}^{\pi}, k=0,...,T-1$ that minimize the cost function $J$
    given by
    \begin{align}\label{eq:SB-cost}
      J(\pi):=\sum_{k=0}^{T-1}\left(\sum_{i,j}C_{ij}\left(P_{k}^{\pi}\right)_{ij}(\varphi_{k}^{\pi})_{j}
      + \lambda\qKL{P_{k}^{\pi} \varphi_{k}^{\pi}}{P^{0} \varphi_{k}^\pi}\right).
    \end{align}
    under the constraints $\varphi_{0}^{\pi}=\varphi_{0}, \varphi_{T}^{\pi} = \varphi_{T}$.
  \end{problem}
  This is an optimal transport over networks where the stage cost is assigned not for the state but for transition.
  It should be emphasized that in comparison to Problem \ref{prob:qKL}, a hard constraint is imposed for the final terminal distribution.

When the regularization term is the Shannon entropy, 
  the Sinkhorn iteration can efficiently solve this problem.
  However, in the case of the $q$-KL divergence, 
  the Sinkhorn iteration solution cannot be directly applied. 
  This is because the proof and derivation of the Sinkhorn algorithm make extensive use of additivity of the Shannon entropy $\mathcal{H}$, 
    \begin{align*}
    \mathcal{H}(\varphi(x_{0},...,x_{T})) &= \mathcal{H}(\varphi(x_{0})) + \sum_{k=0}^{T-1}\mathcal{H}(\varphi(x_{k+1}\mid x_{k})),
    % \\
    % \mathcal{H}_{q}(\varphi(x_{0},...,x_{T})) &\neq \mathcal{H}_{q}(\varphi(x_{0})) + \sum_{k=0}^{T-1}\mathcal{H}_{q}(\varphi(x_{k+1}\mid x_{k}))
  \end{align*}
    which does not hold for the Tsallis entropy. 
  % making it difficult to apply in the case of $q$-entropy, where this property does not hold.
  % Also$q$-entropy, and consequently $\log_q$, do not possess additivity.
    Due to the same reason, the equivalence to the Schrödinger Bridge Problem \cite{ito_maximum_2024} is not straightforward.

  \section{Conclusion}
  \label{sec:conclusion}

  % 本研究ではTsallisエントロピー正則化最適制御問題を定式化し、ベルマン方程式を導出した。
  % また線形可解マルコフ決定過程と線形二次レギュレータの設定において最適制御則を
  % 得る手法を提案し、数値実験によりエントロピーが大きくスパースな解を得るために有用であることを示した。
  We formulated the Tsallis entropy regularized optimal control problem in this study and derived the Bellman equation. 
  We also investigated optimal control policies for 
  linearly solvable Markov decision processes and linear quadratic regulators. 
  Through numerical experiments, we demonstrated the utility of this approach for obtaining solutions 
  that are both high in entropy and sparse.
  Covariance steering and optimal transport problems are currently under investigation.

  \bibliography{tsallis-ctrl}

% Generated by IEEEtran.bst, version: 1.14 (2015/08/26)
\begin{thebibliography}{10}
\providecommand{\url}[1]{#1}
\csname url@samestyle\endcsname
\providecommand{\newblock}{\relax}
\providecommand{\bibinfo}[2]{#2}
\providecommand{\BIBentrySTDinterwordspacing}{\spaceskip=0pt\relax}
\providecommand{\BIBentryALTinterwordstretchfactor}{4}
\providecommand{\BIBentryALTinterwordspacing}{\spaceskip=\fontdimen2\font plus
\BIBentryALTinterwordstretchfactor\fontdimen3\font minus \fontdimen4\font\relax}
\providecommand{\BIBforeignlanguage}[2]{{%
\expandafter\ifx\csname l@#1\endcsname\relax
\typeout{** WARNING: IEEEtran.bst: No hyphenation pattern has been}%
\typeout{** loaded for the language `#1'. Using the pattern for}%
\typeout{** the default language instead.}%
\else
\language=\csname l@#1\endcsname
\fi
#2}}
\providecommand{\BIBdecl}{\relax}
\BIBdecl

\bibitem{haarnoja_soft_2018}
T.~Haarnoja, A.~Zhou, P.~Abbeel, and S.~Levine, ``Soft actor-critic: {Off}-policy maximum entropy deep reinforcement learning with a stochastic actor,'' in \emph{International {Conference} on {Machine} {Learning}}.\hskip 1em plus 0.5em minus 0.4em\relax PMLR, 2018, pp. 1861--1870.

\bibitem{eysenbach_maximum_2021}
B.~Eysenbach and S.~Levine, ``\BIBforeignlanguage{en}{Maximum entropy {RL} (provably) solves some robust {RL} problems},'' in \emph{\BIBforeignlanguage{en}{Proceedings. {International} {Symposium} on {Information} {Theory}, 2005. {ISIT} 2005.}}, Oct. 2021.

\bibitem{oishi_imitation-regularized_2024}
K.~Oishi, Y.~Hashizume, T.~Jimbo, H.~Kaji, and K.~Kashima, ``Imitation-regularized optimal transport on networks: {Provable} robustness and application to logistics planning,'' Feb. 2024, arXiv:2402.17967 [cs.LG].

\bibitem{bao_sparse_2022}
H.~Bao and S.~Sakaue, ``\BIBforeignlanguage{en}{Sparse regularized optimal transport with deformed \textit{q}},'' \emph{\BIBforeignlanguage{en}{Entropy}}, vol.~24, no.~11, p. 1634, Nov. 2022.

\bibitem{tsallis_possible_1988}
C.~Tsallis, ``\BIBforeignlanguage{en}{Possible generalization of {Boltzmann}-{Gibbs} statistics},'' \emph{\BIBforeignlanguage{en}{Journal of Statistical Physics}}, vol.~52, no.~1, pp. 479--487, Jul. 1988.

\bibitem{lee_sparse_2018}
K.~Lee, S.~Choi, and S.~Oh, ``Sparse markov decision processes with causal sparse {Tsallis} entropy regularization for reinforcement learning,'' \emph{IEEE Robotics and Automation Letters}, vol.~3, no.~3, pp. 1466--1473, 2018, publisher: IEEE.

\bibitem{choy_sparse_2020}
J.~Choy, K.~Lee, and S.~Oh, ``Sparse {Actor}-{Critic}: {Sparse} {Tsallis} entropy regularized reinforcement learning in a continuous action space,'' in \emph{2020 17th {International} {Conference} on {Ubiquitous} {Robots} ({UR})}, Jun. 2020, pp. 68--73, iSSN: 2325-033X.

\bibitem{gell-mann_nonextensive_2004}
M.~Gell-Mann and C.~Tsallis, Eds., \emph{Nonextensive {Entropy}: {Interdisciplinary} {Applications}}, ser. Santa {Fe} {Institute} {Studies} on the {Sciences} of {Complexity}.\hskip 1em plus 0.5em minus 0.4em\relax Oxford, New York: Oxford University Press, Apr. 2004.

\bibitem{abe_nonextensive_2001}
S.~Abe, Y.~Okamoto, R.~Beig, J.~Ehlers, U.~Frisch, K.~Hepp, W.~Hillebrandt, D.~Imboden, R.~L. Jaffe, R.~Kippenhahn, R.~Lipowsky, H.~V. Löhneysen, I.~Ojima, H.~A. Weidenmüller, J.~Wess, and J.~Zittartz, Eds., \emph{\BIBforeignlanguage{en}{Nonextensive {Statistical} {Mechanics} and {Its} {Applications}}}, ser. Lecture {Notes} in {Physics}.\hskip 1em plus 0.5em minus 0.4em\relax Berlin, Heidelberg: Springer, 2001, vol. 560.

\bibitem{borges_possible_2004}
E.~P. Borges, ``A possible deformed algebra and calculus inspired in nonextensive thermostatistics,'' \emph{Physica A: Statistical Mechanics and its Applications}, vol. 340, no. 1-3, pp. 95--101, Sep. 2004.

\bibitem{suyari_mathematical_2005}
H.~Suyari, M.~Tsukada, and Y.~Uesaka, ``\BIBforeignlanguage{en}{Mathematical structures derived from the q-product uniquely determined by {Tsallis} entropy},'' in \emph{\BIBforeignlanguage{en}{Proceedings. {International} {Symposium} on {Information} {Theory}, 2005. {ISIT} 2005.}}\hskip 1em plus 0.5em minus 0.4em\relax Adelaide, Australia: IEEE, 2005, pp. 2364--2368.

\bibitem{furuichi_fundamental_2004}
S.~Furuichi, K.~Yanagi, and K.~Kuriyama, ``Fundamental properties of {Tsallis} relative entropy,'' \emph{Journal of Mathematical Physics}, vol.~45, no.~12, pp. 4868--4877, Dec. 2004.

\bibitem{vignat_central_2007}
C.~Vignat and A.~Plastino, ``Central limit theorem, deformed exponentials and superstatistics,'' Jun. 2007, arXiv:0706.0151 [cond-mat].

\bibitem{furuichi_maximum_2009}
S.~Furuichi, ``On the maximum entropy principle and the minimization of the {Fisher} information in {Tsallis} statistics,'' \emph{Journal of Mathematical Physics}, vol.~50, no.~1, p. 013303, Jan. 2009.

\bibitem{neu_unified_2017}
G.~Neu, A.~Jonsson, and V.~Gómez, ``A unified view of entropy-regularized {Markov} decision processes,'' May 2017, arXiv:1705.07798 [cs, stat].

\bibitem{peters_sparse_2019}
B.~Peters, V.~Niculae, and A.~F.~T. Martins, ``Sparse sequence-to-sequence models,'' Jun. 2019, arXiv:1905.05702 [cs].

\bibitem{ito_kullbackleibler_2022}
K.~Ito and K.~Kashima, ``\BIBforeignlanguage{en}{Kullback–{Leibler} control for discrete-time nonlinear systems on continuous spaces},'' \emph{\BIBforeignlanguage{en}{SICE Journal of Control, Measurement, and System Integration}}, vol.~15, no.~2, pp. 119--129, Jun. 2022.

\bibitem{ito_maximum_2024}
------, ``Maximum entropy optimal density control of discrete-time linear systems and {Schrödinger} bridges,'' \emph{IEEE Transactions on Automatic Control}, vol.~69, no.~3, pp. 1536--1551, 2024.

\bibitem{ito_maximum_2023}
------, ``Maximum entropy density control of discrete-time linear systems with quadratic cost,'' Sep. 2023, arXiv:2309.10662 [math.OC].

\end{thebibliography}
  \bibliographystyle{IEEEtran}

  \appendices

  \section{Tsallis entropy regularized optimization}

  We prove two lemmas on optimization problems with a Tsallis entropy regularization term.
  \begin{lemma}\label{lem:TRQ}
    For a real-valued function $Q$ and $\lambda > 0$, 
            \begin{align}\label{eq:opt_p}
      \varphi^{*}(u) := \exp_q\left(-\frac{1}{\lambda}Q(u) + C\right)
    \end{align}
    where $C$ is a constant determined by $\int \varphi^{*}(u) du = 1$
    is a unique minimizer of 
    \begin{align}\label{eq:TRQ-objective}
      J(\varphi) := \mathbb{E}_{u\sim \varphi}\left[Q(u)\right] - \lambda\mathcal{H}_{q}(\varphi). 
    \end{align}
  \end{lemma}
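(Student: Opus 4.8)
The plan is to treat this as a convex variational problem over the set of probability densities $\mathcal{P} := \{\varphi \ge 0 : \int \varphi(u)\,du = 1\}$ and to pin down its minimizer through first-order (KKT) optimality conditions, using convexity to upgrade stationarity to global uniqueness. First I would make the regularizer explicit. Substituting $\log_q \varphi = (\varphi^{1-q}-1)/(1-q)$ and using the normalization $\int\varphi = 1$ gives $\int \varphi\log_q\varphi\,du = \tfrac{1}{1-q}\big(\int \varphi^{2-q}\,du - 1\big)$, so that
\begin{equation}
  J(\varphi) = \int Q(u)\varphi(u)\,du + \frac{\lambda}{(2-q)(1-q)}\int \varphi(u)^{2-q}\,du + \text{const.}
\end{equation}
Since $0 \le q < 1$, we have $2-q > 1$, hence $t \mapsto t^{2-q}$ is strictly convex on $[0,\infty)$, the functional $\varphi \mapsto \int \varphi^{2-q}\,du$ is strictly convex, and (as $\lambda/((2-q)(1-q)) > 0$ and the $Q$-term is linear) $J$ is strictly convex on the convex set $\mathcal{P}$. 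This is the structural fact that makes the whole argument work: a minimizer, if it exists, is unique, and it suffices to exhibit one point satisfying the first-order conditions.

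Next I would introduce a multiplier $\nu$ for the normalization constraint and encode $\varphi \ge 0$ via complementary slackness. Taking the pointwise (Gateaux) derivative of the Lagrangian and using $\tfrac{d}{d\varphi}\big(\varphi\log_q\varphi\big) = \tfrac{(2-q)\varphi^{1-q}-1}{1-q}$ yields the stationarity condition
\begin{equation}
  Q(u) + \frac{\lambda}{2-q}\cdot\frac{(2-q)\varphi(u)^{1-q}-1}{1-q} - \nu
  \begin{cases} = 0, & \varphi(u) > 0, \\[2pt] \ge 0, & \varphi(u) = 0. \end{cases}
\end{equation}
Solving the equality branch for $\varphi^{1-q}$ gives $\varphi(u)^{1-q} = \tfrac{1}{2-q} + \tfrac{(1-q)(\nu - Q(u))}{\lambda}$, and imposing the nonnegativity branch replaces the right-hand side by its positive part. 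Recognizing $\exp_q(y)^{1-q} = [1+(1-q)y]_+$ from \eqref{eq:q-exp}, I would match $y = -Q(u)/\lambda + C$, which forces the $Q$-coefficients to agree automatically and lets the single constant $C$ absorb the multiplier via $1+(1-q)C = \tfrac{1}{2-q} + \tfrac{(1-q)\nu}{\lambda}$. This reproduces exactly $\varphi^*(u) = \exp_q(-Q(u)/\lambda + C)$, with $C$ determined by $\int \varphi^*\,du = 1$.

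Finally, invoking the strict convexity from the first step, this KKT point is the unique global minimizer, which is the claim. The main obstacle I anticipate is the careful handling of the nonnegativity constraint and the associated nonsmoothness: I must justify that the positive-part truncation built into $\exp_q$ is precisely the complementary-slackness structure, and check that the derivative of $\varphi^{2-q}$ remains well defined at the boundary $\varphi = 0$ (it equals $0$ there since $1-q > 0$), so that the inequality branch is meaningful rather than singular. Existence of a finite normalizing constant $C$ — i.e. integrability of the truncated $q$-exponential — I would take as granted, exactly as the statement does when it says ``$C$ is a constant determined by $\int \varphi^{*}(u)\,du = 1$.''
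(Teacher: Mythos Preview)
Your proposal is correct and follows essentially the same KKT-based approach as the paper, which simply writes the stationarity condition $Q(u)-\lambda\log_q\varphi(u)-C'=0$ together with the normalization and inverts via \eqref{eq:inv-1}. Your version is strictly more complete: you supply the strict convexity argument (via $t\mapsto t^{2-q}$ with $2-q>1$) that actually justifies uniqueness, and you make explicit that the complementary-slackness branch of the KKT system is precisely what produces the $[\cdot]_+$ truncation built into $\exp_q$, both of which the paper leaves implicit.
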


  \begin{proof}
    From the KKT conditions, for any optimal solution $\varphi(u)$ there exists $C'$ such that 
    \begin{align}
      Q(u) - \lambda\log_{q}(\varphi(u)) - C' = 0,\label{eq:TRQ-1} \\
      C'\left(\int \varphi(u) du - 1\right) = 0 \label{eq:TRQ-2}.
    \end{align}
    From \eqref{eq:inv-1} and \eqref{eq:TRQ-1}, we obtain \eqref{eq:opt_p}. 
    % becomes
    % \begin{align}
    %   \varphi(x) = \exp_{q}\left(-\frac{1}{\lambda}F(x) + C\right)
    % \end{align}
    % Therefore, the optimal solution $\varphi^*$ 
    % satisfies \eqref{eq:TRQ-1} and $\int \varphi^*(x)dx=1.$
  \end{proof}
% Consider the following objective function, which adds $q$-entropy $\mathcal{H}_{q}$ as a regularization term to the expected value of a function $F$.
%     \begin{align}\label{eq:TRQ-objective}
%       J(p) := \mathbb{E}_{p}\qty[F(x)] - \lambda\mathcal{H}_{q}(p), \quad \lambda > 0
%     \end{align}
%     In this case, the $p^{*}$ that minimizes $J(p)$ is given by the following.
%     Here, $C$ is a constant for normalization, determined so that $\int p^{*}(x) dx = 1$.
  
  In particular, when $Q(u)$ is a quadratic form, the optimal solution becomes a $q$-Gaussian.
  \begin{lemma}\label{lem:TRQ-quadratic}
    For a positive-definite matrix $R \in \mathbb{R}^{n \times n}$ and $\mu \in \mathbb{R}^{n}$, define
    \begin{align}
      Q(u) := (u-\mu)^{\top}R(u-\mu).
    \end{align}
    Then, $\varphi^*$ in \eqref{eq:opt_p} is given by $q$-Gaussian $N_{q}(\mu, \Sigma)$ with
    \begin{align}
      & \Sigma^{-1}:= \frac{(n+4)-(n+2)q}{\lambda}\eta R,                                                                                                                      \\
      & \eta := \left\{ \det(R)^{-1/2} \left(\frac{\pi \lambda}{1-q}\right)^{n/2} \frac{\Gamma\left(\frac{2-q}{1-q}\right)}{\Gamma\left(\frac{2-q}{1-q} + \frac{n}{2}\right)} \right\}^{\frac{2(1-q)}{(n+2)-nq}}.
      \label{eq:eta}
    \end{align}
  \end{lemma}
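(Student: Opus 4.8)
The plan is to take the abstract minimizer $\varphi^{*}$ supplied by Lemma \ref{lem:TRQ}, specialize it to the quadratic $Q(u)=(u-\mu)^{\top}R(u-\mu)$, and reshape it into the exact form of the $q$-Gaussian density in Definition \ref{def:q-Gaussian}; the parameters $\Sigma$ and $\eta$ then drop out by matching the two expressions. By \eqref{eq:opt_p} the minimizer is $\varphi^{*}(u)=\exp_{q}(-\tfrac{1}{\lambda}(u-\mu)^{\top}R(u-\mu)+C)$ with $C$ fixed by normalization. The difficulty is that $C$ enters \emph{additively inside} $\exp_{q}$, whereas in the $q$-Gaussian the normalizer $1/Z_{q}$ appears \emph{multiplicatively outside}; since the ordinary exponent rules fail, I cannot split $\exp_{q}(a+b)$ directly.

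First I would remove this obstacle with the scaling identity for $\exp_{q}$. Writing $a:=1+(1-q)C$ and $s:=(u-\mu)^{\top}R(u-\mu)$, the definition \eqref{eq:q-exp} together with $[a z]_{+}=a[z]_{+}$ for $a>0$ gives
\begin{align}
  \exp_{q}\!\left(-\tfrac{s}{\lambda}+C\right)=a^{\frac{1}{1-q}}\exp_{q}\!\left(-\frac{s}{\lambda a}\right).
\end{align}
Here $a>0$ is forced: at $u=\mu$ we have $\varphi^{*}(\mu)=[a]_{+}^{1/(1-q)}$, so $a\le 0$ would make $\varphi^{*}$ vanish identically. The right-hand side already has the $q$-Gaussian shape---a constant times $\exp_{q}$ of a scaled quadratic.

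Next I would match this against Definition \ref{def:q-Gaussian}. Equating the arguments of $\exp_{q}$ yields $\Sigma^{-1}=\frac{(n+4)-(n+2)q}{\lambda a}R$, and equating the prefactors yields $a^{1/(1-q)}=1/Z_{q}$. It then remains to solve this normalization condition for $a$. Computing $\det(\Sigma)^{1/2}$ from the first relation and substituting it into the formula for $Z_{q}$, the factors $(n+4)-(n+2)q$ cancel and leave $Z_{q}=a^{n/2}B$, where $B$ is precisely the brace appearing in \eqref{eq:eta}. Using $\tfrac{1}{1-q}+\tfrac{n}{2}=\tfrac{(n+2)-nq}{2(1-q)}$, the condition $a^{1/(1-q)}Z_{q}=1$ collapses to the scalar equation
\begin{align}
  a^{-\frac{(n+2)-nq}{2(1-q)}}=B,
\end{align}
whose unique positive root is $a=B^{-\frac{2(1-q)}{(n+2)-nq}}=\eta^{-1}$.

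Finally, substituting $1/a=\eta$ back into $\Sigma^{-1}$ gives $\Sigma^{-1}=\frac{(n+4)-(n+2)q}{\lambda}\eta R$, the claimed formula, which identifies $\varphi^{*}$ as $N_{q}(\mu,\Sigma)$. I expect the only genuine obstacle to be the additive-constant-versus-prefactor mismatch, dissolved by the scaling identity above; the rest is the bookkeeping of collecting the Gamma-function and determinant factors into $\eta$ and checking the exponent arithmetic.
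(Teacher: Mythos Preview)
Your proof is correct and essentially identical to the paper's: both extract the additive constant from $\exp_q$ via the same scaling identity, match against the $q$-Gaussian density, and solve the resulting normalization equation for the single unknown. The only cosmetic difference is that the paper writes the extracted constant as $\exp_q(C)$ (so that your $a$ is their $\exp_q(C)^{1-q}$ and your $\eta=1/a$ is their $\exp_q(C)^{-(1-q)}$), but the computation and the exponent arithmetic are the same.
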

  \begin{proof}
    For simplicity, let $\mu = 0$. Then,  
    % From Lemma \ref{lem:TRQ}, the $\varphi^{*}$ that minimizes the expected value of $F(x)$ is given by:
    % \begin{align}
    %   \varphi^{*}(x) = \exp_q\qty(-\frac{1}{\lambda}x^{\top}Rx + C)
    % \end{align}
    % Then, from the definition of the $q$-exponential function \eqref{eq:q-exp},
    \begin{align}
      \varphi^{*}(u) &= \exp_q\left(-\frac{1}{\lambda}u^{\top}Ru + C\right)\\
      &=\exp_q(C)\exp_q\left(-\frac{1}{\lambda\exp_q(C)^{1-q}} u^{\top}Ru\right). 
    \end{align}
    Thus, $\varphi^{*}$ is $N_q(0,\Sigma)$ with 
    % Therefore, according to definition \ref{def:q-Gaussian}, 
    % $\varphi^{*}$ is a $q$-Gaussian with mean $0$, and variance given by the following $\Sigma$:
    \begin{equation}
      \Sigma^{-1}:= \frac{(n+4)-(n+2)q}{\lambda\exp_q(C)^{1-q}}R. 
    \end{equation}
    From the normalization condition,  
    % $\int \varphi^*(x)dx = 1$,
    $\exp_q(C)$ satisfies
    \begin{align*}
      \begin{split}
        &\exp_q(C)^{-1} \\
        &= \det(R)^{-1/2}\left(\frac{\pi \lambda \exp_q(C)^{1-q}}{1-q}\right)^{n/2}\frac{\Gamma\left(\frac{2-q}{1-q}\right)}{\Gamma\left(\frac{2-q}{1-q} + \frac{n}{2}\right)}.  
      \end{split}      
    \end{align*}
    Finally, $\eta := \exp_q(C)^{-(1-q)}$ is equivalent to \eqref{eq:eta}.
    % we obtain the following:
    % \begin{align*}
    %   \eta &:= \exp_q(C)^{-(1-q)}\\
    %   &=\left\{ \det(R)^{-1/2} \left(\frac{\pi \lambda}{1-q}\right)^{n/2} \frac{\Gamma\left(\frac{2-q}{1-q}\right)}{\Gamma\left(\frac{2-q}{1-q} + \frac{n}{2}\right)} \right\}^{\frac{2(1-q)}{(n+2)-nq}}.
    % \end{align*}
  \end{proof}

\end{document}